\def\ps@pprintTitle{%
  \let\@oddhead\@empty
  \let\@evenhead\@empty
  \def\@oddfoot{\reset@font\hfil\thepage\hfil}
  \let\@evenfoot\@oddfoot
}
\newcommand{\Gamto}{\stackrel{\Gamma}\longrightarrow}
\newcommand{\eps}{\epsilon}
\newcommand{\R}{\mathbb{R}}
\renewcommand{\AA}{{\mathcal A}}
\newcommand{\KK}{{\mathcal K}}
\newcommand{\FF}{{\mathcal F}}
\newcommand{\CC}{{\mathcal C}}
\renewcommand{\SS}{{\mathcal S}}
\newcommand{\N}{\mathbb{N}}
\newcommand{\Z}{\mathbb{Z}}
\newcommand{\Rn}{{\R^n}}
\DeclareMathOperator*{\dist}{dist} 
\newcommand{\T}{\mathbb{T}}
 \renewcommand{\labelenumi}{\theenumi}
\def\csname ver@etex.sty\endcsname{3000/12/31}
\newtheorem{theorem}{Theorem}[section]
\newtheorem{definition}[theorem]{Definition}
\newtheorem{proposition}[theorem]{Proposition}
\newtheorem{remark}[theorem]{Remark}
\newtheorem{lemma}[theorem]{Lemma}
\newtheorem{corollary}[theorem]{Corollary}
\begin{document}

\begin{abstract}
  We derive a macroscopic limit for a sharp interface version of a model
  proposed in \cite{KoShAn-2006} to investigate pattern formation due to
  competition of chemical and mechanical forces in biomembranes. We identify
  sub-- and supercrital parameter regimes and show with the introduction of the
  autocorrelation function that the ground state energy leads to the
  isoperimetric problem in the subcritical regime, which is interpreted to not
  form fine scale patterns.
\end{abstract}

\begin{keyword}
  Calculus of Variations \sep Autocorrelation function \sep Non--local
  isoperimetric problem \sep Pattern formation \sep Biomembranes
\end{keyword}

\begin{frontmatter}
  \title{\texorpdfstring{$\Gamma$--limit}{Gamma--limit} for a sharp interface model related to pattern formation
    on biomembranes}
  \author[hd]{Denis Brazke} \ead{denis.brazke@uni-heidelberg.de}
  \address[hd]{University of Heidelberg (Germany), Institute of Applied
    Mathematics}
  \author[hd]{Hans Kn\"upfer} \ead{knuepfer@uni-heidelberg.de}
  \author[hd]{Anna Marciniak--Czochra}
  \ead{anna.marciniak@iwr.uni-heidelberg.de}
\end{frontmatter}

\tableofcontents


\newpage

\section{Introduction} \label{sec:introduction}

We consider the family of nonlocal isoperimetric problems
\begin{align} \label{eq:energy} E_{\gamma,\eps}[u] := \int_{\T^n} |\nabla u| -
  \frac{\gamma}{\eps^{n + 1}} \int_{\Rn} \KK(\tfrac z\eps) \fint_{\T^n} |u(x +
  z) - u(x)| \ dx \ dz ,
\end{align}
where $u \in BV(\T^n, \{0,1\})$ and where $\T^n$, $n \geq 2$, is the
$n$--dimensional unit flat torus. Here, $\gamma > 0$ is a fixed parameter and
$\KK$ is a radially symmetric with certain integrability conditions. For details, see Section
\ref{sec:Results}. This class of problems appears in mathematical models of
complex materials (such as biomembranes, see below), diblock copolymers (see
\cite{ChoksiPeletier-2010, JulPis:2017}) and cell motility (see
\cite{AlMeMe:2020}). We derive the limit $\eps \to 0$ of the family of models
$E_{\gamma,\eps}$ in the framework of $\Gamma$--convergence.

\medskip

The family of functionals \eqref{eq:energy} is a more general sharp interface
version of a model proposed in \cite{KoShAn-2006} for the investigation of
structures which arise due to competing diffusive and mechanical forces. In
particular, it is relevant for the modelling of formation of so called
\textit{lipid rafts} in cell membranes. These are complex nanostructures made up
of lipids, proteins and cholesterol and are believed to be responsible for many
biological phenomena such as transmembrane signaling and cellular homeostasis
(see e.g. \cite{SimonsIkonen1997, LorLev:2015, RajendranSimons1099,
  SonPri:2013}). A diffuse interface version of \eqref{eq:energy} was
considered in \cite{FHLZ-2016} and can be formulated in terms of the order
parameter (i.e. concentration of the chemical) $u$ alone. The resulting energy
is given by
\begin{align} \label{eq:parameter_energy} \FF_{q,\eps}[u] = \frac 1\eps
  \int_{\Omega} W(u) + (1 - q) \eps^2 |\nabla u|^2 - u^2 + u (\mathbf 1 - \eps^2
  \Delta)^{-1}u \ dx,
\end{align}
where $\Omega$ is a reference domain, $W$ is a double well potential, $\eps > 0$
and $q < 1$. The solution operator $(\mathbf 1 - \eps^2\Delta)^{-1}$ is subject
to Neumann boundary conditions. In \cite{FHLZ-2016} it was shown that for
sufficiently small $q > 0$, the family $\FF_{q,\eps}$ $\Gamma$--converges to a
constant multiple of the perimeter functional as $\eps \to 0$ in the
$L^2$--topology for a standard family of double well potentials (quadratic roots
and quadratic growth). We comment on the relation between the diffuse and sharp
interface model in Section \ref{sec:Results} below.

\medskip

In this article, we compute explicitely the $\Gamma$--limit of the family
$E_{\gamma,\eps}$ under suitable assumptions on $\KK$. We identify two parameter
regimes (sub-- and supercritical) with respect to $\gamma$ and show that the
limit problem is the isoperimetric problem (with modified prefactor) in the subcritical regime. In the
supercritical regime, we show that minimising sequences of $E_{\gamma,\eps}$
always have unbounded perimeter. 

\medskip

Our main observation is that both the perimeter and the non--local term in
\eqref{eq:energy} can be represented in terms of the (symmetrised) autocorrelation function
\begin{align}
  c_u(r) := \fint_{\mathbb S^{n - 1}} \int_{\T^n} u(x + rw) \, u(x) \ dx \ dw.
\end{align}
The formulation of the energy in terms of the autocorrelation function (Lemma
\ref{lem:reformulation_non_local}) reveals that although the energy
$E_{\gamma,\eps}$ is not linear in $u$, it is linear in terms of the
autocorrelation function. For the proofs of the upper and lower bound we then
can use properties of the autocorrelation function, which allows us to treat a
large class of kernels.

\paragraph{Related Literature}

Different classes of kernels in isoperimetric problems, where the
non--local term has the same scaling as the local one, were considered e.g. in
\cite{CesNov:2020, MelWu:2022, MurSim:2019, RenWei-2000}. Representing the
energy in terms of the autocorrelation function and exploiting its fine
properties has been proposed for a similar family of energies in
\cite{KnuShi:2021}. There, the autocorrelation function was used to derive a
second order expansion for the perimeter functional. However, the family of
kernels is subject to different conditions and converge monotonically to a
measurable function with certain integrability properties, wheras our family of
kernels is assumed to form an approximation of the identity. Due to the highly
singular nature of the considered kernels near the origin, our techniques require more information about
the regularity of the autocorrelation function. This is not needed in
\cite{KnuShi:2021} by the assumption of monotone convergence of the kernels. For
this purpose, we show regularity properties of the autocorrelation function for
polytopes, which are able to approximate any shape of finite perimeter (see \eqref{eq:density}).

\medskip

The $\Gamma$--convergence of \eqref{eq:energy} where $\KK$ is the solution to
the Helmholtz equation with different boundary conditions was considered in
\cite{MelWu:2022}. Their techniques are based on PDE arguments and they require
the family of kernels to be solutions of the Helmholtz equation, wheras our
techniques work for a larger class of kernels and are based on decay
estimates. In particular, we recover the $\Gamma$--convergence result and are
able to extend it to kernels which are given by
$\KK + (-\Delta)^{\frac s2} \KK = \delta_0$ for any $1 < s \leq 2$ (see the
appendix).

\medskip

The non--local term appearing in \eqref{eq:energy} is a periodic version of the
quantity already considered in the famous work \cite{BBM:2001} and was further
investigated in \cite{Davil:2002}. There it was shown that the non--local term
converges pointwise to a constant multiple of the perimeter functional (under
suitable assumptions on the family of
kernels).
Our main result will deal with $\Gamma$--convergence, which is in general 
different to pointwise convergence (see e.g. \cite[Example 4.4]{Dal-1993}).

\medskip

Lastly, non--local isoperimetric problems of the form \eqref{eq:energy} have
also been considered in the mathematical literature in many different models,
e.g. sharp interface variants of the Ohta-Kawasaki model where the non--local
term is a Coulomb or Riesz type interaction (see \cite{AcerbiFuscoMorini-2011,
  AlbertiChoksiOtto-2009, ChoksiPeletier-2010, CicSpa:2013, Cristo:2015,
  GoMuSe:2013, GoMuSe:2014, JulPis:2017, MorSte:2014}). We note that, contrary
to our energy, in these models the non--local term does not have the same
scaling as the perimeter functional.

\paragraph{Structure of the paper}

In Section \ref{sec:Results} we present and discuss our main results. We also
outline the proofs. In Section \ref{sec:Autocorrelation} we introduce the
autocorrelation function and give some of its properties. In Section
\ref{sec:proofs} we reformulate the energy in terms of the autocorrelation
function and present the proofs of our main results, namely the compactness and
$\Gamma$--convergence of $E_{\gamma,\eps}$ (see Theorem \ref{thm:compactness}
and Theorem \ref{thm:convergence}). In the Appendix we collect some further calculations.

\paragraph{Notation} \label{subsec:notation} Throughout this paper, we write
$\T^n := \R^n/\Z^n$ for the $n$ dimensional flat torus. Without distinction in
notation, we identify $(0,1)^n$--periodic functions in $\R^n$ with functions on
$\T^n$. We denote $|\Omega|$ for the Lebesgue measure of $\Omega$. For $r > 0$
and $x \in \R^n$ we denote $B_r(x) := \{y \in \R^n: |x - y| < r\}$ for the ball
around $x$ of radius $r$ and the unit sphere by
$\mathbb S^{n - 1} := \partial B_1(0)$. We denote $\omega_n$ for the volume of
the unit ball in $\R^n$ and $\sigma_n := n \omega_n$ for its surface area. We
write $A \lesssim B$ if there exists a universal constant $C > 0$, such that
$A \leq C B$. A function $u \in L^1(\T^n)$ is said to be a function of bounded
variation if {\small \begin{align} \int_{\T^n} |\nabla u| := \sup
    \bigg\{\int_{\T^n} u(x) \, (\nabla \cdot \varphi)(x) \ dx : \varphi \in
    C^1(\T^n, \R^n), \ \|\varphi\|_{L^\infty(\T^n)} \leq 1 \bigg\} < \infty.
	\end{align}}%
      In this case, we write $u \in BV(\T^n)$. We denote the perimeter functional $P : L^1(\T^n) \longrightarrow [0,\infty]$ by
      \begin{align}
        P[u] := \left\{ \begin{array}{lcl} 
                          \displaystyle{\int_{\T^n} |\nabla u|} && \text{if } u \in BV(\T^n,\{0,1\}), \\[10pt] 
                          +\infty && \text{if } u \in L^1(\T^n) \setminus BV(\T^n,\{0,1\}).
                        \end{array} \right.
      \end{align}

      \paragraph{Acknowledgments}

      D.B. would like to warmly thank M. Mercker, T. Stiehl, J. Fabiszisky,
      B. Brietzke, C. Tissot and A. Tribuzio for valuable discussions on the topic. This
      work is funded by \textit{Deutsche Forschungsgemeinschaft} (DFG, German
      Research Foundation) under Germany's Excellence Strategy
      EXC-2181/1-39090098 (the Heidelberg STRUCTURES Cluster of Excellence).

      \section{Statement and discussion of the main results} \label{sec:Results}

      In this section, we give the precise
      formulation of our setting and main results. Throughout this article, we
      assume $n \in \N$, $n \geq 2$. Since we are interested in the case of
      prescribed volume fraction, we introduce the class of admissible functions
      \begin{align}
        \AA := \bigg\{u \in BV(\T^n,\{0,1\}) : \int_{\T^n} u(x) \ dx \ = \ \theta \bigg\}
      \end{align}
      for some fixed parameter $0 < \theta < 1$. We also fix the function
      $\KK : \R^n \longrightarrow \R$ and define $\KK_\eps(z) := \frac 1{\eps^n} \KK(\frac z\eps)$.
      We recall the energy functional
      \begin{align}\label{eq:definition_energy}
        E_{\gamma,\eps}[u] = P[u] - \frac \gamma \eps \int_{\R^n} \KK_\eps(z) \int_{\T^n} |u(x + z) - u(x)| \ dx \ dz
      \end{align}
      when $u \in \AA$, and $E_{\gamma,\eps}[u] = +\infty$ for all
      $u \in L^1(\T^n) \setminus \AA$. The kernel $\KK$ is subject to the following conditions:
      {\renewcommand{\labelenumi}{\arabic{enumi}}
	\begin{enumerate} \itemsep 6pt 
        \item[(H1)] \label{it-H1} $\KK$ is radial.
        \item[(H2)] \label{it-H2} $ |z| \, \KK(z) \in L^1(\Rn)$.
        \item[(H3)] \label{it-H3} $\displaystyle \int_{B_r(0)^c} \KK(z) \ dz \geq 0$ for all $r > 0$.
        \end{enumerate}}              
      \newcommand{\hyp}[1]{{\rm (\hyperref[it-H#1]{H#1})}}              
     By a
      slight abuse of notation we also write $\KK(z) = \KK(|z|)$. The
      imposed conditions \hyp1--\hyp3 are different to the conditions
              imposed on the family of kernels considered in \cite{Davil:2002}
              (see also \cite[Theorem 1.2]{MelWu:2022}): although our family of
              kernels arises as $L^1$--dilates of $\KK$, we relax the assumption
              of the non--negativity of $\KK_\eps$ by \hyp3. 

              \medskip

              As the quantity $E_{\gamma,\eps}$ is the difference of two
              positive terms with the same scaling, uniform control of the
              energy of a sequence of shapes in general does not a priori lead
              to control of the perimeter. However, we identify the critical
              parameter
              \begin{align} \label{def-gam} %
		\gamma_{\rm crit} := \frac{n \omega_n}{2\omega_{n - 1}} \bigg( \int_{\Rn} |z| \, \KK(z) \ dz \bigg)^{-1} 
              \end{align}
              such that for $0 < \gamma < \gamma_{\rm crit}$ the perimeter is
              controlled by the energy, while the perimeter is not controled for
              $\gamma > \gamma_{\rm crit}$.

              \begin{theorem}[Compactness and
                non--compactness] \label{thm:compactness} Let $n \geq 2$ and
                suppose that $\KK$ satisfies \hyp1 -- \hyp3.
                \begin{enumerate}
                \item \label{it-thm:compactness:compactness} Compactness: Let
                  $0 \leq \gamma < \gamma_{\rm crit}$. Then for any sequence
                  $u_\eps \in L^1(\T^n)$ with
                  $\sup_{\eps} E_{\gamma,\eps}[u_\eps] < \infty$ there exists
                  $u \in \AA$ and a subsequence (not relabeled) such that
                  \begin{align}
                    u_\eps \xrightarrow{\eps \to 0} u	&& \text{in } L^1(\T^n).
                  \end{align}
                \item \label{it-thm:compactness:non_compactness}
                  Non--compactness: Let $\gamma \geq \gamma_{\rm crit}$. Then
                  there exists a sequence $u_\eps \in \AA$ such that
                  $\sup_{\eps} E_{\gamma,\eps}[u_\eps] < \infty$, but there is
                  no $u \in \AA$ such that $u_\eps \to u$ in $L^1(\T^n)$ for any
                  subsequence.
		\end{enumerate}
              \end{theorem}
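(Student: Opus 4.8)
The plan is to derive both statements from the reformulation of the energy in terms of the autocorrelation function. Writing $z=rw$ in polar coordinates, using \hyp1 and the elementary identity $\fint_{\mathbb{S}^{n-1}}\int_{\T^n}|u(x+rw)-u(x)|\,dx\,dw=2(\theta-c_u(r))$ (valid for $u\in\AA$, since then $u\in\{0,1\}$ and $\int_{\T^n}u=\theta$), Lemma~\ref{lem:reformulation_non_local} expresses, for every $u\in\AA$ and $\eps>0$,
\begin{align}
  E_{\gamma,\eps}[u]=P[u]-\frac{2\gamma\,\sigma_n}{\eps}\int_0^\infty\KK(s)\,s^{n-1}\,\big(\theta-c_u(\eps s)\big)\,ds .
\end{align}
I would then use only three properties of $g_u(r):=\theta-c_u(r)$, all from Section~\ref{sec:Autocorrelation}: $g_u(0)=0$, $g_u\ge0$, and the sharp estimate $0\le g_u(r)\le\frac{\omega_{n-1}}{\sigma_n}\,r\,P[u]$ for all $r>0$ (equivalently, $c_u$ is Lipschitz with constant $\frac{\omega_{n-1}}{\sigma_n}P[u]$). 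Since $n\omega_n=\sigma_n$, the value in \eqref{def-gam} is $\gamma_{\rm crit}^{-1}=\tfrac{2\omega_{n-1}}{\sigma_n}\int_{\Rn}|z|\,\KK(z)\,dz$, and the whole point is that this Lipschitz constant is precisely the one producing $\gamma_{\rm crit}$.

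For part~\ref{it-thm:compactness:compactness}, note first that finiteness of $E_{\gamma,\eps}[u_\eps]$ forces $u_\eps\in\AA$, so $\|u_\eps\|_{L^1(\T^n)}=\theta$. The key step is the $\eps$--uniform a priori bound
\begin{align}
  E_{\gamma,\eps}[u]\ \ge\ \Big(1-\tfrac{\gamma}{\gamma_{\rm crit}}\Big)\,P[u]\qquad\text{for all }u\in\AA .
\end{align}
To prove it, set $f(s):=g_u(\eps s)$, so $f(0)=0$, $f\ge0$, and $\mathrm{Lip}(f)\le\frac{\omega_{n-1}}{\sigma_n}\eps\,P[u]$. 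Since $\KK$ need not be non--negative, the integral in the reformulation cannot be estimated termwise; instead I would write $f(s)=\int_0^s f'(t)\,dt$ and exchange the order of integration (valid by \hyp2) to obtain
\begin{align}
  \int_0^\infty\KK(s)\,s^{n-1}f(s)\,ds=\int_0^\infty f'(t)\,G(t)\,dt,\qquad G(t):=\frac1{\sigma_n}\int_{B_t(0)^c}\KK(z)\,dz=\int_t^\infty\KK(s)\,s^{n-1}\,ds\ \ge\ 0 ,
\end{align}
the non--negativity of $G$ being exactly \hyp3. Hence the integral is at most $\mathrm{Lip}(f)\int_0^\infty G(t)\,dt=\mathrm{Lip}(f)\cdot\frac1{\sigma_n}\int_{\Rn}|z|\,\KK(z)\,dz$ (the last identity once more by Fubini and \hyp2), and plugging in $\mathrm{Lip}(f)$ and the formula for $\gamma_{\rm crit}$ shows the subtracted term is $\le\frac{\gamma}{\gamma_{\rm crit}}P[u]$, which gives the bound. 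As $\gamma<\gamma_{\rm crit}$, $\sup_\eps E_{\gamma,\eps}[u_\eps]<\infty$ yields $\sup_\eps P[u_\eps]<\infty$; together with the uniform $L^1$ bound and compactness of bounded sets of $BV(\T^n)$ in $L^1(\T^n)$, a subsequence converges in $L^1$, hence (along a further subsequence) a.e., to some $u$; since $u_\eps\in\{0,1\}$ a.e. so is $u$, and $\int_{\T^n}u=\lim_\eps\int_{\T^n}u_\eps=\theta$, so $u\in\AA$. (For $\gamma=0$ this is trivial, as $E_{0,\eps}=P$ on $\AA$.)

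For part~\ref{it-thm:compactness:non_compactness}, I would fix $\lambda_k=1/N_k\to0$, $N_k\in\N$, and take $u_k\in\AA$ to be the striped configuration depending only on $x_1$, of period $\lambda_k$, equal to $1$ on a fraction $\theta$ of each period, so $P[u_k]=2/\lambda_k\to\infty$. With $\mu:=\min\{\theta,1-\theta\}>0$ one computes directly that $g_{u_k}(r)=\frac{2\omega_{n-1}}{\sigma_n}\cdot\frac{r}{\lambda_k}=\frac{\omega_{n-1}}{\sigma_n}\,r\,P[u_k]$ for $0\le r\le\mu\lambda_k$ (so $u_k$ achieves equality in the extremal estimate on $[0,\mu\lambda_k]$), while $0\le g_{u_k}(r)\le\mu$ for all $r$. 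Splitting the $s$--integral in the reformulation at $s=\mu\lambda_k/\eps$ and using these facts together with \hyp2 gives
\begin{align}
  E_{\gamma,\eps}[u_k]=\frac{2}{\lambda_k}\Big(1-\frac{\gamma}{\gamma_{\rm crit}}\Big)+\frac{1}{\lambda_k}\,O\!\bigl(\textstyle\int_{\{|z|>\mu\lambda_k/\eps\}}|z|\,|\KK(z)|\,dz\bigr) .
\end{align}
For each fixed $k$ the error tends to $0$ as $\eps\to0$ (by \hyp2), so one may choose $\eps_k\to0$ small enough that $\tfrac1{\lambda_k}\int_{\{|z|>\mu\lambda_k/\eps_k\}}|z|\,|\KK(z)|\,dz\to0$; then $E_{\gamma,\eps_k}[u_k]=\frac{2}{\lambda_k}(1-\tfrac{\gamma}{\gamma_{\rm crit}})+o(1)$, which for $\gamma\ge\gamma_{\rm crit}$ is bounded above (indeed $\to-\infty$ if $\gamma>\gamma_{\rm crit}$ and $\to0$ if $\gamma=\gamma_{\rm crit}$); hence $\sup_k E_{\gamma,\eps_k}[u_k]<\infty$. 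Finally $u_k\rightharpoonup\theta$ weakly in $L^1(\T^n)$, so if a subsequence converged in $L^1$ to some $u\in\AA$ then $u\equiv\theta$, contradicting $u\in\{0,1\}$ a.e. as $0<\theta<1$; hence no such limit exists.

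The hard part is the sign--indefiniteness of $\KK$: \hyp3 is the only available control, and it is exploited exclusively through the identity $\int_0^\infty\KK(s)s^{n-1}f(s)\,ds=\int_0^\infty f'(t)G(t)\,dt$ with $G\ge0$, which converts a sign--indefinite integral against the (highly singular) kernel into a genuine integral against its non--negative tail. This step needs $f$, hence $c_u$, to be Lipschitz, and it is the \emph{sharp} value of its Lipschitz constant that identifies $\gamma_{\rm crit}$ as the true threshold. In part~\ref{it-thm:compactness:non_compactness} the borderline case $\gamma=\gamma_{\rm crit}$ is the delicate one: the leading term vanishes and one must push the far--tail error below order $1/\lambda_k$, which is exactly why striped competitors — which realize equality in $g_u(r)\le\frac{\omega_{n-1}}{\sigma_n}rP[u]$ on an interval of length $\mu\lambda_k\gg\eps$ — are the right choice, and why $\eps_k$ is selected after $\lambda_k$.
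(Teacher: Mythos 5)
Your proposal is correct, and its overall architecture coincides with the paper's: a pointwise lower bound $E_{\gamma,\eps}[u]\ge(1-\gamma/\gamma_{\rm crit})P[u]$ for the compactness, and fine laminates plus a diagonal choice of $\eps$ for the non--compactness. For part (i) your argument is essentially identical to the paper's: writing $f(s)=\int_0^s f'(t)\,dt$ and applying Fubini against $G(t)=\int_t^\infty\KK(\rho)\rho^{n-1}\,d\rho\ge0$ is exactly the integration by parts of Lemma \ref{lem:reformulation_non_local} (with $G=\sigma_n^{-1}\Phi$), and bounding $f'$ by its Lipschitz constant is the paper's use of $-c_u'(r)\le-c_u'(0)=\frac{\omega_{n-1}}{n\omega_n}P[u]$ in Proposition \ref{prp-pointwise_bound_energy}. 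The one genuine difference is in part (ii): where the paper invokes the pointwise convergence $E_{\gamma,\eps}[v_k]\to E_{\gamma,0}[v_k]$ for general polytopes (Proposition \ref{prp-energy_convergence_polytopes}, resting on Lemma \ref{lemma:polytope_autocorrelation}), you compute the stripe autocorrelation explicitly — $g_{u_k}(r)=\frac{\omega_{n-1}}{\sigma_n}rP[u_k]$ on $[0,\mu\lambda_k]$, i.e. exact linearity with the extremal slope — and control the tail quantitatively via {\rm (H2)}. This is more self--contained and makes the borderline case $\gamma=\gamma_{\rm crit}$ transparent (leading term vanishes identically, only the tail error remains), whereas the paper needs the polytope machinery anyway for the limsup inequality of Theorem \ref{thm:convergence}; your computation is in fact the stripe case $a_2=\dots=a_n=0$ of Lemma \ref{lemma:polytope_autocorrelation}. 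Only cosmetic points: the theorem asks for a sequence indexed by all small $\eps$, so you should set $u_\eps:=u_k$ on the interval $(\eps_{k+1},\eps_k]$ and note that your tail error is monotone in $\eps$, so the bound at $\eps_k$ controls the whole interval — exactly the paper's diagonal step; and the weak convergence $u_k\rightharpoonup\theta$ is most cleanly stated as weak--$*$ in $L^\infty$.
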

              Theorem \ref{thm:compactness} identifies the subcritical parameter
              regime $0 \leq \gamma < \gamma_{\rm crit}$ and the supercritical
              parameter regime $\gamma \geq \gamma_{\rm crit}$. We learn that
              the local term dominates in the subcritical regime, while in the
              supercritical regime the destabilising effect of the non--local
              interaction takes over. The failure of compactness is reflected in
              uniform lower bounds of the energy and the behaviour of minimising
              sequences:

              \begin{corollary}[Lower bound of the
                energy] \label{coro:finer_behaviour} Let $n \geq 2$ and suppose
                that $\KK$ satisfies \hyp1 -- \hyp3.
                \begin{enumerate}
		\item \label{it-coro:finer_behaviour1} Let
                  $0 \leq \gamma \leq \gamma_{\rm crit}$. Then
                  $E_{\gamma,\eps} \geq 0$ for every $\eps > 0$.
		\item \label{it-coro:finer_behaviour2} Let $\gamma > \gamma_{\rm
                    crit}$. Then there exists a sequence $u_\eps \in \AA$ such
                  that
                  \begin{align}\label{eq:sequence_energy_negative_infinity}
                    E_{\gamma,\eps}[u_\eps] \xrightarrow{\eps \to 0} -\infty.
                  \end{align}
                  Moreover
                  $\displaystyle{\liminf_{\eps \to 0} P[u_\eps] = \infty}$ for
                  any sequence, such that
                  \eqref{eq:sequence_energy_negative_infinity} holds.
                \end{enumerate}
              \end{corollary}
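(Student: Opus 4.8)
The plan is to deduce both parts of the corollary from the single pointwise inequality
\[ E_{\gamma,\eps}[u]\ \ge\ \Bigl(1-\tfrac{\gamma}{\gamma_{\rm crit}}\Bigr)\,P[u]\qquad\text{for all } u\in\AA,\ \eps>0,\ \gamma>0, \]
which is exactly the estimate underlying the compactness part of Theorem~\ref{thm:compactness}; I would record it as a standalone fact, since it uses no relation between $\gamma$ and $\gamma_{\rm crit}$. Granting it, item~\ref{it-coro:finer_behaviour1} is immediate: for $0\le\gamma\le\gamma_{\rm crit}$ the prefactor $1-\gamma/\gamma_{\rm crit}$ is nonnegative and $P[u]\ge 0$, while $E_{\gamma,\eps}\equiv+\infty$ off $\AA$.

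For the pointwise inequality I would start from the reformulation of the nonlocal term through the autocorrelation function (Lemma~\ref{lem:reformulation_non_local}). Set $h_u(r):=\theta-c_u(r)=\tfrac12\fint_{\mathbb S^{n-1}}\|u(\cdot+rw)-u\|_{L^1(\T^n)}\,dw$. Using that $\KK$ is radial by \hyp1 and that $|z|\KK(z)\in L^1$ by \hyp2, the nonlocal term is a fixed positive multiple of $\tfrac1\eps\int_0^\infty\KK(\rho)\,\rho^{n-1}\,h_u(\eps\rho)\,d\rho$; after matching the constants via the classical identity $\fint_{\mathbb S^{n-1}}|e\cdot w|\,dw=\tfrac{2\omega_{n-1}}{n\omega_n}$, the inequality reduces to
\[ \int_0^\infty \KK(\rho)\,\rho^{n-1}\bigl(\beta\eps\rho-h_u(\eps\rho)\bigr)\,d\rho\ \ge\ 0,\qquad \beta:=\tfrac{\omega_{n-1}}{n\omega_n}\,P[u]. \]
The integrand factor $\rho\mapsto\beta\eps\rho-h_u(\eps\rho)$ vanishes at $\rho=0$, is nonnegative, and is nondecreasing: the triangle inequality applied along each fixed direction shows that $r\mapsto\|u(\cdot+rw)-u\|_{L^1(\T^n)}$ is Lipschitz with constant $\int_{\partial^*\{u=1\}}|\nu\cdot w|\,d\mathcal H^{n-1}$, and averaging over $w\in\mathbb S^{n-1}$ gives that $h_u$ is Lipschitz with the \emph{sharp} constant $\beta$. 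Hence $\beta\eps\rho-h_u(\eps\rho)=\int_0^\rho d\mu(s)$ for a nonnegative measure $\mu\le 2\beta\eps\,\mathcal L^1$, and Fubini turns the integral into $\int_0^\infty\bigl(\int_s^\infty\KK(\rho)\rho^{n-1}\,d\rho\bigr)\,d\mu(s)$, which is nonnegative because $\int_s^\infty\KK(\rho)\rho^{n-1}\,d\rho=\tfrac1{\sigma_n}\int_{B_s(0)^c}\KK\ge 0$ by \hyp3 (the absolute integrability needed for Fubini comes again from \hyp2).

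For item~\ref{it-coro:finer_behaviour2} I would build the sequence by recycling Theorem~\ref{thm:compactness}\ref{it-thm:compactness:non_compactness} \emph{at the parameter} $\gamma_{\rm crit}$: let $v_\eps\in\AA$ be the resulting sequence, so $\sup_\eps E_{\gamma_{\rm crit},\eps}[v_\eps]<\infty$ and no subsequence of $v_\eps$ converges in $L^1(\T^n)$ to an element of $\AA$. The non-compactness forces $P[v_\eps]\to\infty$, for otherwise a subsequence with bounded perimeter would be bounded in $BV(\T^n)$, hence have an $L^1$-convergent subsequence whose limit is $\{0,1\}$-valued with integral $\theta$ and therefore lies in $\AA$. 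Since $E_{\gamma,\eps}$ and $E_{\gamma_{\rm crit},\eps}$ share the same nonlocal integral, with coefficients $\gamma$ and $\gamma_{\rm crit}$, eliminating it yields
\[ E_{\gamma,\eps}[v_\eps]\ =\ \Bigl(1-\tfrac{\gamma}{\gamma_{\rm crit}}\Bigr)P[v_\eps]\ +\ \tfrac{\gamma}{\gamma_{\rm crit}}\,E_{\gamma_{\rm crit},\eps}[v_\eps], \]
and since $1-\gamma/\gamma_{\rm crit}<0$, $P[v_\eps]\to\infty$, and $0\le E_{\gamma_{\rm crit},\eps}[v_\eps]\le C$ (the lower bound by item~\ref{it-coro:finer_behaviour1} at $\gamma_{\rm crit}$), we get $E_{\gamma,\eps}[v_\eps]\to-\infty$, i.e.\ \eqref{eq:sequence_energy_negative_infinity}. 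Finally, for any $u_\eps\in\AA$ with $E_{\gamma,\eps}[u_\eps]\to-\infty$, the pointwise inequality gives $E_{\gamma,\eps}[u_\eps]\ge(1-\gamma/\gamma_{\rm crit})P[u_\eps]$; were $\liminf_\eps P[u_\eps]$ finite, then along a subsequence $P[u_\eps]$ would be bounded and, as $1-\gamma/\gamma_{\rm crit}<0$, $E_{\gamma,\eps}[u_\eps]$ would be bounded below along it, contradicting $E_{\gamma,\eps}[u_\eps]\to-\infty$. Hence $\liminf_\eps P[u_\eps]=\infty$.

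The only substantial step is the pointwise inequality, and within it the delicate point is that \hyp3 is invoked for a \emph{sign-changing} kernel: the layer-cake argument only closes after $\beta\eps\rho-h_u(\eps\rho)$ has been shown monotone, which in turn rests on the sharp Lipschitz constant $\beta=\tfrac{\omega_{n-1}}{n\omega_n}P[u]$ for $h_u$, obtained by estimating along each direction first and averaging over the sphere afterwards rather than by bounding $h_u$ directly. Everything else is algebra together with the statement of Theorem~\ref{thm:compactness}.
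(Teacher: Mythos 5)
Your argument is correct. For part (i) and for the necessity of unbounded perimeter in part (ii) you rely on exactly the mechanism the paper uses, namely the pointwise bound $E_{\gamma,\eps}[u]\ge(1-\tfrac{\gamma}{\gamma_{\rm crit}})P[u]$ for all $u\in\AA$ and all $\gamma,\eps>0$; this is precisely Proposition~\ref{prp-pointwise_bound_energy}, and your self-contained derivation of it (the Lipschitz bound $\|h_u'\|_{L^\infty}\le\beta$ from Proposition~\ref{prp-autocorrelation_bv}~\ref{it-prp-autocorrelation_bv:perimeter}, followed by a layer-cake/Fubini step that tests the monotone remainder $\beta\eps\rho-h_u(\eps\rho)$ against the integrated kernel $\Phi\ge0$) is the same computation as Lemma~\ref{lem:reformulation_non_local} combined with Proposition~\ref{prp-pointwise_bound_energy}, with Fubini playing the role of the integration by parts, and with \hyp2 correctly supplying the absolute integrability and \hyp3 the sign. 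The only genuinely different step is the construction of the diverging sequence in part (ii): the paper recycles the explicit laminates $v_k(x)=\chi_\Omega(kx)$ from the proof of Theorem~\ref{thm:compactness}~\ref{it-thm:compactness:non_compactness} and invokes the pointwise convergence $E_{\gamma,\eps}[v_k]\to(1-\tfrac{\gamma}{\gamma_{\rm crit}})P[v_k]$ for polytopes (Proposition~\ref{prp-energy_convergence_polytopes}), whereas you use the statement of Theorem~\ref{thm:compactness}~\ref{it-thm:compactness:non_compactness} at $\gamma=\gamma_{\rm crit}$ as a black box, extract $P[v_\eps]\to\infty$ via $BV$ compactness (this extra step is needed and you supply it correctly), and then exploit the affine dependence of $E_{\gamma,\eps}$ on $\gamma$ together with $0\le E_{\gamma_{\rm crit},\eps}[v_\eps]\le C$. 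Both routes are valid and of comparable length: the paper's is more concrete and reuses machinery it needs anyway for the limsup inequality, while yours has the mild advantage of depending only on the statement, not the proof, of the non-compactness result.
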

              We note that in the critical case $\gamma = \gamma_{\rm crit}$,
              the family of energies $E_{\gamma_{\rm crit},\eps}$ does not have
              the compactness property even though it is bounded from
              below. This is due to the fact that
              $0 \leq E_{\gamma_{\rm crit},\eps}[u] \to 0$ for polytopes (see Proposition \ref{prp-pointwise_bound_energy} and
              \ref{prp-energy_convergence_polytopes}), which are able to approximate any shape of finite perimeter (see \eqref{eq:density}). In this case, higher
              order effects play a role and the theory of higher order
              $\Gamma$--expansions, developed in \cite{AnzBal:1993}, should be
              suitable.

              \medskip

              As explained in the introduction, we are interested in the
              limiting behaviour of minimising sequences as $\eps \to 0$. We
              show that our family of non--local energies $\Gamma$--converges to
              a local energy functional:
              \begin{theorem}[$\Gamma$--convergence] \label{thm:convergence} Let
                $n \geq 2$ and suppose that $\KK$ satisfies \hyp1 -- \hyp3. Let
                $0 \leq \gamma \leq \gamma_{\rm crit}$. Then
                $E_{\gamma,\eps} \stackrel{\Gamma}{\longrightarrow}
                E_{\gamma,0}$ with respect to the $L^1$--topology, where
                \begin{align}\label{eq:definition_limit_energy}
                  E_{\gamma,0}[u] := \left\{ \begin{array}{lcl} (1 - \frac{\gamma}{\gamma_{\rm crit}}) P[u] && \text{if } u \in \AA, \\[10pt] 
                                               +\infty											&& \text{if } u \in L^1(\T^n) \setminus \AA.
                                             \end{array} \right.
		\end{align}
                In particular, for every $u \in L^1(\T^n)$ we have:
		\begin{enumerate}
                \item Liminf inequality: For every sequence
                  $u_\eps \in L^1(\T^n)$ such that $u_\eps \to u$ in $L^1(\T^n)$
                  we have
                  $E_{\gamma,0}[u] \leq \liminf_{\eps \to 0}
                  E_{\gamma,\eps}[u_\eps]$.
                \item Limsup inequalty: There is a sequence
                  $u_\eps \in L^1(\T^n)$ such that $u_\eps \to u$ in $L^1(\T^n)$
                  and
                  $\limsup_{\eps \to 0} E_{\gamma,\eps}[u_\eps] \leq
                  E_{\gamma,0}[u]$.
		\end{enumerate}
              \end{theorem}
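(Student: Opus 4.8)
The plan is to deduce both $\Gamma$--inequalities from three facts that are available once the theorem is stated: the representation of $E_{\gamma,\eps}$ through the autocorrelation function $c_u$ (Lemma \ref{lem:reformulation_non_local}); the ensuing a priori lower bound $E_{\gamma,\eps}[u]\ge(1-\tfrac{\gamma}{\gamma_{\rm crit}})P[u]$, valid for every $u$ and every $\eps>0$ when $\gamma\le\gamma_{\rm crit}$ (Proposition \ref{prp-pointwise_bound_energy}, of which Corollary \ref{coro:finer_behaviour}\ref{it-coro:finer_behaviour1} is a special case); and the exact pointwise limit $E_{\gamma,\eps}[u]\to(1-\tfrac{\gamma}{\gamma_{\rm crit}})P[u]$ for polytopes $u$ of volume $\theta$ (Proposition \ref{prp-energy_convergence_polytopes}). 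Granting these, the $\liminf$ inequality is essentially soft, while the $\limsup$ inequality becomes a density/diagonal argument based on \eqref{eq:density}.

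\emph{Liminf inequality.} Let $u_\eps\to u$ in $L^1(\T^n)$ and assume $L:=\liminf_\eps E_{\gamma,\eps}[u_\eps]<\infty$ (otherwise nothing is to prove). Pass to a subsequence attaining the $\liminf$, so that $E_{\gamma,\eps}[u_\eps]\le L+1$ for small $\eps$ and in particular $u_\eps\in\AA$. If $\gamma<\gamma_{\rm crit}$, the a priori bound gives $P[u_\eps]\le (L+1)(1-\tfrac{\gamma}{\gamma_{\rm crit}})^{-1}$, so $(u_\eps)$ is bounded in $BV$; by $BV$--compactness together with uniqueness of the $L^1$--limit (equivalently, by Theorem \ref{thm:compactness}\ref{it-thm:compactness:compactness}) we get $u\in BV(\T^n,\{0,1\})$ with $\int_{\T^n}u=\theta$, i.e. $u\in\AA$. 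Using the a priori bound once more and the $L^1$--lower semicontinuity of $P$,
\begin{align}
  \liminf_{\eps\to 0} E_{\gamma,\eps}[u_\eps]\ \ge\ \Big(1-\tfrac{\gamma}{\gamma_{\rm crit}}\Big)\liminf_{\eps\to 0}P[u_\eps]\ \ge\ \Big(1-\tfrac{\gamma}{\gamma_{\rm crit}}\Big)P[u]\ =\ E_{\gamma,0}[u].
\end{align}
In the critical case $\gamma=\gamma_{\rm crit}$ the limit functional vanishes on $\AA$, and the inequality reduces for $u\in\AA$ to $\liminf_\eps E_{\gamma_{\rm crit},\eps}[u_\eps]\ge 0$, which is precisely Corollary \ref{coro:finer_behaviour}\ref{it-coro:finer_behaviour1}.

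\emph{Limsup inequality.} Fix $u\in L^1(\T^n)$. If $u\notin\AA$ then $E_{\gamma,0}[u]=+\infty$ and any recovery sequence works, so assume $u\in\AA$. If $u$ is the indicator of a polytope of volume $\theta$, Proposition \ref{prp-energy_convergence_polytopes} shows that the \emph{constant} sequence $u_\eps:=u$ already satisfies $\limsup_\eps E_{\gamma,\eps}[u]=\lim_\eps E_{\gamma,\eps}[u]=(1-\tfrac{\gamma}{\gamma_{\rm crit}})P[u]=E_{\gamma,0}[u]$. For general $u\in\AA$ invoke \eqref{eq:density}: there are polytopes $P_k$ of volume $\theta$ with $\mathbf 1_{P_k}\to u$ in $L^1$ and $P[\mathbf 1_{P_k}]\to P[u]$. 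For each $k$, Proposition \ref{prp-energy_convergence_polytopes} provides $\eta_k>0$ with $E_{\gamma,\eps}[\mathbf 1_{P_k}]\le(1-\tfrac{\gamma}{\gamma_{\rm crit}})P[\mathbf 1_{P_k}]+\tfrac1k$ for all $\eps<\eta_k$; arranging $\eta_k\downarrow 0$ and setting $u_\eps:=\mathbf 1_{P_k}$ for $\eps\in[\eta_{k+1},\eta_k)$ yields $u_\eps\to u$ in $L^1$ and
\begin{align}
  \limsup_{\eps\to 0}E_{\gamma,\eps}[u_\eps]\ \le\ \Big(1-\tfrac{\gamma}{\gamma_{\rm crit}}\Big)\lim_{k\to\infty}P[\mathbf 1_{P_k}]\ =\ \Big(1-\tfrac{\gamma}{\gamma_{\rm crit}}\Big)P[u]\ =\ E_{\gamma,0}[u].
\end{align}
This is the standard diagonal (Attouch--type) argument; the only points to check are that \eqref{eq:density} is used in its strict form (simultaneous $L^1$-- and perimeter--convergence, so that $P[\mathbf 1_{P_k}]\to P[u]$ and not merely $\ge$) and that the approximants can be chosen with the exact volume $\theta$, which costs only an $o(1)$ change of perimeter.

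\emph{Main obstacle.} Once the autocorrelation reformulation and the polytope estimates are available, the $\Gamma$--convergence argument is bookkeeping; the substance is in those two inputs, both of which are stated earlier. The lower bound $E_{\gamma,\eps}[u]\ge(1-\tfrac{\gamma}{\gamma_{\rm crit}})P[u]$ relies on \hyp3: after rewriting the nonlocal term via $c_u$ and integrating by parts against the nonnegative tail $s\mapsto\int_{B_s(0)^c}\KK$, one still needs the \emph{sharp} slope estimate $\tfrac{d}{dr}(\theta-c_u)(r)\le\tfrac{\omega_{n-1}}{n\omega_n}P[u]$ for a.e. $r$ (coming from subadditivity of the directional difference quotients $\int_{\T^n}|u(x+rw)-u(x)|\,dx$ and the one--dimensional slicing/BBM identity for their slope at $r=0$). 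The exact pointwise limit on polytopes additionally requires sufficient regularity of $r\mapsto c_u(r)$ near $r=0$ to pass the limit through the singular kernel, and this is the genuinely delicate analytic ingredient: it holds for polytopes but not for arbitrary finite--perimeter shapes, which is exactly why the $\limsup$ construction must route through \eqref{eq:density} rather than through a direct approximation of $u$.
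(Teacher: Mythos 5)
Your argument is correct and follows essentially the same route as the paper: the liminf inequality is obtained from the pointwise bound $E_{\gamma,\eps}\ge E_{\gamma,0}$ (Proposition \ref{prp-pointwise_bound_energy}) combined with $BV$--compactness and lower semicontinuity of the perimeter, and the limsup inequality from pointwise convergence on polytopes (Proposition \ref{prp-energy_convergence_polytopes}) together with the density \eqref{eq:density}, a volume rescaling, and the same diagonal selection $u_\eps=v_k$ on nested $\eps$--intervals. The only difference is cosmetic: you treat the critical case $\gamma=\gamma_{\rm crit}$ of the liminf inequality separately via Corollary \ref{coro:finer_behaviour}\ref{it-coro:finer_behaviour1}, whereas the paper folds it into the same chain of inequalities.
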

              Although the non--local term leads to a
              reduction of the interfacial cost by a factor of
              $\frac{\gamma}{\gamma_{\rm crit}}$ in the subcritical regime, the
              limit problem is simply the isoperimetric problem with a suitably 
              modified prefactor. Hence, the unique minimiser
              (up to translation and rotation) is given by $u = \chi_\Omega$,
              where $\Omega \subset \T^n$ is a single ball or the complement of
              a single ball if
                \begin{align}
                  \min\{\theta, 1 - \theta\} <  \Big(\frac 2n \Big)^{\frac{n}{n + 1}} \omega_n^{-\frac{1}{n - 1}},
                \end{align}
                and by a single laminate otherwise. If equality holds, both ball
                and laminate are solutions.

                \medskip

                Since our admissible class of functions is restricted to
                functions with values in $\{0,1\}$, both main theorems hold true
                with respect to the $L^p$--topology for any $p \in
                [1,\infty)$. Furthermore, we note that the main theorems also
                hold without the assumption of a fixed volume fraction,
                i.e. for admissible class of functions $\AA = BV(\T^n,\{0,1\})$
                as can be shown with minor modifications of the proofs.

                \paragraph{Strategy of the proofs}
                In the following we give an overview of the central strategy
                of the proofs for the theorems, noting that the detailed proofs
                are given in Section \ref{subsec:proofs}. Central to our proofs
                is the formulation of the energy in terms of the symmetrised
                autocorrelation function (see Lemma
                \ref{lem:reformulation_non_local}). At this stage, the radial
                symmetry of the kernel is crucial and the prefactor $\eps^{-1}$
                of the non--local term ensures that the weight in the
                reformulation is an approximation of the identity. This implies
                that the limiting behaviour of the energy only depends on
                the autocorrelation function near the origin.

              \medskip

              The lower bound in Theorem \ref{thm:convergence} follows from
              sharp estimates of the auto--correlation function. The upper bound
              requires higher regularity of the autocorrelation function near
              the origin in order to pass to the limit. For this purpose, we
              show that the autocorrelation function is a polynomial
              near the origin for polytopes (see Lemma
              \ref{lemma:polytope_autocorrelation}). The reformulation of the
              energy in terms of the autocorrelation function allows for a
              splitting into behaviour near the origin (which is regular for
              polytopes as the autocorrelation function is a polynomial) and far
              from the origin (which is small since the weight is an
              approximation of the identity). The remainder of the proof of the
              upper bound follows from approximation with polytopes (see \eqref{eq:density}). 
	
              \paragraph{Relation to biological model and comparision of sharp and diffuse interface models}
              As
              explained in the introduction, the underlying biological model is
              a diffuse interface model (see \eqref{eq:parameter_energy}).
              In the
              Modica--Mortola theory, there is a connection between diffuse and
              sharp interface energies in the framework of $\Gamma$--convergence, namely (see 
              \cite{ChoksiSternberg2006,Modica-1987a})
              \begin{align} \label{eq:diffuse_sharp_explanation} %
                  \int_{\T^n}\frac 1\eps W(u) + (1 - q) \eps |\nabla u|^2 \ dx
                  \Gamto c_{W,q} \int_{\T^n} |\nabla u|
              \end{align}
              as $\eps \to 0$, where
              \begin{align} \label{eq:sharp_interface_constant}
              	 c_{W,q} = 2 \sqrt{1 - q} \int_0^1 \sqrt{W(s)} \ ds.
              \end{align}
              
              Naively, this would suggest to correspondingly replace the diffuse
              interface term in \eqref{eq:parameter_energy} by its sharp interface counterpart as in \eqref{eq:diffuse_sharp_explanation}.
              In particular,
              we note that for $\KK$ being the solution to
              	\begin{align} \label{eq:helmholtz_kernel}
              		\KK - \Delta \KK = \delta_0 	&& \text{in } \Rn,
              	\end{align}
                $\FF_{q,\eps}$ in \eqref{eq:parameter_energy} is a corresponding diffuse interface model of $E_{\gamma,\eps}$ as described above. The energy $\FF_{q,\eps}$ has been considered in
                \cite{FHLZ-2016} where it was
                shown that there exists $\bar q > 0$, such that $\FF_{q,\eps}$
                $\Gamma$--converges to a constant multiple of the perimeter
                functional. We note that the results of \cite{FHLZ-2016} do not
                contain a characterisation or estimation of $\bar q$, nor the
                constant appearing in their $\Gamma$--limit. This is due to a
                nonlinear interpolation inequality shown in
                \cite{ChDaMaFoLe:2011}, which has no explicit description of the
                constants involved (see also \cite{CiSpNuZe:2011}).   On the other hand our
                result 
                shows that the critical value $q_{\rm crit}$ of the
                corresponding sharp interface model (using the relations \eqref{eq:diffuse_sharp_explanation} and \eqref{eq:sharp_interface_constant}) is given by
            		\begin{align}
            			q_{\rm crit} = 1 - \Big(4 \gamma_{\rm crit} \int_0^1 \sqrt{W(s)} \ ds\Big)^{-2}.
            		\end{align}
          		For $\KK$ as in \eqref{eq:helmholtz_kernel}, we can explicitely compute the corresponding
                critical value of the sharp interface model and it is given by
                $\gamma_{\rm crit} = 1$ (Lemma \ref{lem:helmholtz}). However, one cannot equate the parameter $q_{\rm crit}$ with the corresponding critical value $\bar q$ in \cite{FHLZ-2016}. 
              Indeed, for a suitable doublewell potential with $\|W\|_{L^\infty(0,1)} < \frac 1{16}$ we would get $q_{\rm crit} < 0$. This shows, that $\bar q = q_{\rm crit}$ cannot hold, since the result of \cite{FHLZ-2016} shows that $\bar q > 0$.
		This is
        probably due to the fact that the sharp interface model neglects effects
        which might occur of properties of the non--linearity (i.e. the double
        well potential $W$) such as asymmetry, compressibility of the chemical
        substance, or behaviour at infinity or the roots. In particular, the
        interpolation inequality used in \cite{FHLZ-2016} in order to show
        $\bar q > 0$ only works for double well potentials $W$ which have
        quadratic roots and superquadratic growth (see also \cite[Section
        3]{CiSpNuZe:2011}). The sharp interface model does not contain
        information about the double well potential $W$ other than the quantity
        $\int_0^1 \sqrt{W(s)} \ ds$, which is unrelated to the aformentioned
        properties.
            
\medskip
              
              Our main result is similar to that of \cite{FHLZ-2016} in that we
              find two regimes for a parameter related to the relative strength
              of the non--local interaction (for our energy $\gamma$, in
              \cite{FHLZ-2016} it is $q$), and the limit problem in the
              subcritical regime is the isoperimetric problem. The
              interpretation is the same as in \cite{FHLZ-2016}, namely in the
              subcritical parameter regime, fine scale pattern formation does
              not occur in contrast to experimental results (e.g. stripe
              patterns or hexagonal array of balls, see
              \cite{RoKaGr:2005, KaiGro:2010}). We conjecture fine scale patterns
              to form for the critical value $\gamma = \gamma_{\rm crit}$, which
              is subject of current research.

              \medskip

\medskip

Although the reformulation of the non--local part in terms of the autocorre--lation function (see Lemma \ref{lem:reformulation_non_local}) is also valid for the diffuse interface model \eqref{eq:parameter_energy}, our methods used for the sharp interface model \eqref{eq:definition_energy} fail for two main reasons. First, the autocorrelation function lacks useful properties for Sobolev functions $u \in H^1(\T^n)$ such as $c'_u(0) = 0$ in contrast to Proposition \ref{prp-autocorrelation_bv} \ref{it-prp-autocorrelation_bv:perimeter}. Second, the energy is not anymore solely representable by the autocorrelation function due to the non--linearity $W$, i.e. the diffuse interface model is not linear in the space of autocorrelation functions, which our methods heavily rely on.
	
\begin{remark}[Domains with arbitrary periodicity] %
  The choice of the unit flat torus in contrast to flat tori with side length
  $\ell > 0$ is justified by the scaling of the energy with respect to the
  reference domain. More precisely, let $\T^n_\ell := \R^n/(\ell \Z)^n$ and
  define
\begin{align}
  E_{\gamma,\eps}^{(\ell)}[u] := \int_{\T^n_\ell} |\nabla u| - \frac{\gamma}{\eps^{n + 1}} \int_{\R^n} \KK(\tfrac z\eps) \int_{\T^n_\ell} \frac{|u(x + z) - u(x)|}{|\T^n_\ell|} \ dx \ dz.
\end{align}
Let $u \in BV(\T^n,\{0,1\})$ and define the rescaled function $u_\ell \in BV(\T_\ell^n,\{0,1\})$ via $u_\ell(x) := u(\frac x\ell)$. Then it holds
\begin{align} \label{eq:energy_scaling}
  E^{(\ell)}_{\gamma,\eps}[u_\ell] = \ell^{n - 1} E_{\gamma,\ell^{-1} \eps}[u].
\end{align}
Thus, the limiting behaviour $\eps \to 0$ as well as the subsequent analysis can
be reduced to the case of the unit flat torus and the citical value
$\gamma_{\rm crit}$ is independent of the choice of $\ell$.
\end{remark}

\section{Autocorrelation function} \label{sec:Autocorrelation}

We introduce the radially symmetrised autocorrelation function.
\begin{definition}[Autocorrelation function]
	Let $u \in L^1(\T^n)$. We define the autocorrelation function $C_u: \R^n \longrightarrow \R$ by
        \begin{align}
          C_u(z) := \int_{\T^n} u(x + z) \, u(x) \ dx.
        \end{align}
	We also define its radially symmetrised version $c_u: \R_+
        \longrightarrow \R$ by
        \begin{align}
          c_u(r) := \frac{1}{\sigma_n} \int_{\mathbb{S}^{n - 1}} C_u(rw) \ dw.
        \end{align}
      \end{definition}

      Since $C_u$ is $(0,1)^n$--periodic, we can identify $C_u$ with a function
      on the flat torus $\T^n$. Note that $c_u$ is generally not periodic.

      \medskip

      The autocorrelation function was studied in the whole space setting (see
      \cite{Matheron:1986, Galerne:2011}). It is linked to the covariogram of a
      set $\Omega \subset \R^n$ which is defined by
      \begin{align}\label{eq:covariogram_full_space}
        g_\Omega(x) := |\Omega \cap (\Omega + x)| = \int_{\R^n} u(z) \, u(x + z) \ dz,	&& \text{where } u = \chi_\Omega.
      \end{align}
      In \cite{Galerne:2011} it was shown that $g_\Omega$ is a Lipschitz
      function if and only if $\Omega$ is a set of finite perimeter. The author
      also presents formulas for the perimeter of $\Omega$ in terms of the
      covariogram, which we will also exploit in this work (adjusted for subsets
      of the flat torus $\T^n$, see also \cite{KnuShi:2021}).

      \medskip

      The covariogram has also been investigated in the context of convex
      geometry (see e.g. \cite{MeReSc1993, Nagel:1993, GenBia:2015}). In
      \cite{MeReSc1993} it was shown that under certain smoothness conditions of
      the boundary of a set, the covariogram of a convex body is a smooth and
      they also present formulas for the first and second derivatives in this
      case. Further applications are random sets \cite{Galerne:2011,
        Bianchi:2002, ClaMal:1970} and micromagnetism \cite{KnuShi:2021}.

      \medskip

      We briefly recall basic properties of the autocorrelation
      function:

\begin{proposition}\label{prp-autocorrelation_bv}
  Let $u \in BV(\T^n,\{0,1\})$. Then $C_u$ and $c_u$ are Lipschitz continuous and it holds:
  \begin{enumerate} \itemsep 6pt
  \item \label{it-prp-autocorrelation_bv:origin}
    $0 \leq C_u(x) \leq C_u(0) = \|u\|_{L^1(\T^n)} \qquad$ for all $x \in \T^n$.
  \item \label{it-prp-autocorrelation_bv:origin2}
    $0 \leq c_u(r) \leq c_u(0) = \|u\|_{L^1(\T^n)} \qquad$ for all $r > 0$.
  \item \label{it-prp-autocorrelation_bv:perimeter}
    $\displaystyle{ \|c_u'\|_{L^\infty(0,\infty)} = -c_u'(0) = \frac{\omega_{n -
          1}}{n \omega_n} \int_{\T^n} |\nabla u|}$.
	  \end{enumerate}
\end{proposition}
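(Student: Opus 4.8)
The plan is to handle the three statements in increasing order of difficulty, the last one being the only substantial point. For \ref{it-prp-autocorrelation_bv:origin}: since $u$ takes values in $\{0,1\}$ we have $u^2=u$, so $C_u(0)=\int_{\T^n}u^2=\int_{\T^n}u=\|u\|_{L^1(\T^n)}$, and $0\le C_u(z)\le C_u(0)$ is immediate from $0\le u(\cdot+z)\le 1$ inside $C_u(z)=\int_{\T^n}u(x+z)u(x)\,dx$. For Lipschitz continuity of $C_u$ one writes $C_u(z)-C_u(z')=\int_{\T^n}(u(x+z)-u(x+z'))u(x)\,dx$, hence $|C_u(z)-C_u(z')|\le\|u(\cdot+z-z')-u\|_{L^1(\T^n)}\le |z-z'|\int_{\T^n}|\nabla u|$ by the standard translation estimate for $BV$ functions. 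Statement \ref{it-prp-autocorrelation_bv:origin2} and the Lipschitz continuity of $c_u$ then follow by averaging over $\mathbb S^{n-1}$, using $|rw-r'w|=|r-r'|$.

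The heart of the matter is \ref{it-prp-autocorrelation_bv:perimeter}. My approach is via one--dimensional slicing together with the one--dimensional covariogram. Fix a unit vector $w$ and slice $\T^n$ by lines in direction $w$: for a.e.\ $y$ in the orthogonal hyperplane let $E^w_y\subset\R$ (resp.\ $\T^1$) be the corresponding slice of $\{u=1\}$, which is a set of finite perimeter, and set $N^w_y:=\tfrac12\#\partial^*E^w_y$. Fubini gives $C_u(rw)=\int g_{E^w_y}(r)\,dy$, where $g_E(r):=\int\chi_E(t+r)\chi_E(t)\,dt$ is the one--dimensional covariogram. The elementary but crucial fact is that for a one--dimensional set of finite perimeter $E$, writing $D\chi_E=\sum_i(\delta_{a_i}-\delta_{b_i})$, one has the a.e.\ representation $g_E'(r)=\sum_i\big(\chi_E(a_i-r)-\chi_E(b_i-r)\big)$ — a sum of $N:=N(E)$ terms in $[0,1]$ minus $N$ terms in $[0,1]$ — so that $\|g_E'\|_{L^\infty}\le N$, while for small $r>0$ every term equals $-1$, giving $g_E'(0^+)=-N$. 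Integrating in $y$ (dominated convergence with majorant $y\mapsto N^w_y$, which lies in $L^1$ by the integral--geometric slicing identity $\int\#\partial^*E^w_y\,dy=\int_{\partial^*\{u=1\}}|\nu\cdot w|\,d\mathcal H^{n-1}$) shows that $r\mapsto C_u(rw)$ is Lipschitz with
\[
|\partial_rC_u(rw)|\ \le\ -\,\partial_\rho^+C_u(\rho w)\big|_{\rho=0}\ =\ \tfrac12\int_{\partial^*\{u=1\}}|\nu\cdot w|\,d\mathcal H^{n-1}.
\]

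It then remains to average over $w\in\mathbb S^{n-1}$. Differentiating $c_u$ under the integral and using the displayed bound, $|c_u'(r)|\le\frac1{\sigma_n}\int_{\mathbb S^{n-1}}|\partial_rC_u(rw)|\,dw\le\frac1{\sigma_n}\int_{\mathbb S^{n-1}}\big(-\partial_\rho^+C_u(\rho w)|_{\rho=0}\big)\,dw=-c_u'(0)$; the very same computation, now with the Fubini identity $\int_{\mathbb S^{n-1}}|\nu\cdot w|\,dw=2\omega_{n-1}$ and $\sigma_n=n\omega_n$, evaluates $-c_u'(0)=\frac{\omega_{n-1}}{n\omega_n}\int_{\T^n}|\nabla u|$. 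Finally $\|c_u'\|_{L^\infty(0,\infty)}\ge -c_u'(0)$, because $c_u$ is Lipschitz and hence $|c_u'(0^+)|=\lim_{\delta\to0^+}|c_u(\delta)-c_u(0)|/\delta\le\|c_u'\|_{L^\infty(0,\delta)}$. Combining the two inequalities yields \ref{it-prp-autocorrelation_bv:perimeter}.

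I expect the main obstacle to be the bookkeeping in the slicing step on the torus: along irrational directions the slices $E^w_y$ are non--periodic subsets of $\R$ of locally finite perimeter, so one must set up the decomposition $C_u(rw)=\int g_{E^w_y}(r)\,dy$ and the slicing formula for the perimeter carefully (e.g.\ on a fundamental domain, or directly from the coarea/slicing theory of $BV$ functions), and one must justify the interchange of $\partial_r$ with the $y$-- and $w$--integrals as well as the existence of the one--sided derivative at the origin. The purely one--dimensional input — that the covariogram decreases most steeply at the origin — is elementary, and everything else is soft.
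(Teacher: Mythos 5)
The paper does not prove this proposition at all --- it simply cites \cite[Propositions 2.3 and 2.4]{KnuShi:2021} --- so you are supplying an argument where the paper has none. Your parts \ref{it-prp-autocorrelation_bv:origin} and \ref{it-prp-autocorrelation_bv:origin2} and the Lipschitz continuity of $C_u$ via the $BV$ translation estimate are correct and complete. For part \ref{it-prp-autocorrelation_bv:perimeter}, your one--dimensional covariogram computation is the right mechanism (monotone steepest descent at the origin, slope $-N$ per slice), and the averaging identities $\int_{\mathbb S^{n-1}}|\nu\cdot w|\,dw = 2\omega_{n-1}$, $\sigma_n = n\omega_n$ are used correctly.

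The one point that is more than bookkeeping is the slicing identity itself. For irrational $w$ the slice $E^w_y\subset\R$ is an unbounded, non--periodic set with $|E^w_y|=\infty$ and $\#\partial^*E^w_y=\infty$, so $g_{E^w_y}(r)$ and $N^w_y$ are both infinite and the decomposition $C_u(rw)=\int g_{E^w_y}(r)\,dy$ is false as written; restricting the $s$--integration to a fundamental domain $Q_y$ produces a truncated covariogram whose derivative carries boundary contributions at $\partial Q_y$, and the slice--wise inequality $\|F_y'\|_\infty\le -F_y'(0^+)$ can fail for the truncated object, so the estimate only closes after re--integrating and invoking periodicity. You flag this, but the fix is not automatic. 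A cleaner route that avoids slicing entirely: since $u\in\{0,1\}$, one has $2\,[\,C_u(0)-C_u(z)\,]=\tau_u(z):=\int_{\T^n}|u(x+z)-u(x)|\,dx$, and $\tau_u$ is subadditive along rays, $\tau_u((t+s)w)\le\tau_u(tw)+\tau_u(sw)$, by the triangle inequality and translation invariance of the integral. Hence $t\mapsto\tau_u(tw)/t$ is maximised in the limit $t\to0^+$, where it converges to the directional variation $\int_{\partial^*\{u=1\}}|\nu\cdot w|\,d\mathcal H^{n-1}$; this gives simultaneously $|\partial_r C_u(rw)|\le-\partial_r^+C_u(rw)|_{r=0}=\tfrac12\int|\nu\cdot w|\,d\mathcal H^{n-1}$ and the Lipschitz bound, after which your averaging over $w$ goes through verbatim. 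With either repair the proof is complete.
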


\begin{proof} See \cite[Proposition 2.3 and Proposition 2.4]{KnuShi:2021}.\end{proof}

For the proof of the upper bound of Theorem \ref{thm:convergence}, in order to pass to the limit $\eps \to 0$, we need higher regularity than Lipschitz continuity of the autocorrelation function near the origin. However, one cannot expect higher regularity in general since e.g. $c''_{\chi_\Omega}(0) = +\infty$ when $\Omega$ has a cusp. We will work around this issue by showing that the autocorrelation function of polytopes is a polynomial near the origin, and hence regular. The desired limit for a general shape of finite perimeter will be dealt with via approximation by polytopes, i.e. for every $u \in BV(\T^n,\{0,1\})$ there exists a sequence of polytopes $\Omega_k \subset \T^n$ such that (see \cite{Fed58})
	\begin{align} \label{eq:density}
		\|u - \chi_{\Omega_k}\|_{L^1(\T^n)} \xrightarrow{k \to \infty} 0,	&& |P[\chi_{\Omega_k}] - P[u]| \xrightarrow{k \to \infty} 0.
	\end{align}
The following result is known in the full space setting for convex polytopes for $n = 2$ (see \cite{Nagel:1993,GarZha:1998}), but we were not able to find versions of it for arbitrary polytopes. We note that a closed set $\Omega \subset \T^n$ is said to be a polytope, if the restriction of its canonical embedding to $[0,1]^n$ is a polytope in $\R^n$. 

\begin{lemma}[Autocorrelation function for polytopes] \label{lemma:polytope_autocorrelation}
  Let $\Omega \subset \T^n$ be a polytope and let $u := \chi_\Omega$. Then there
  exists $0 < R < 1$ and $a_2,\ldots,a_n \in \R$
  with $a_2 \geq 0$ such that
		\begin{align} \label{eq:ac} %
			c_u(r) = \|u\|_{L^1(\T^n)} - r\frac{\omega_{n - 1}}{n \omega_n} \int_{\T^n} |\nabla u| + \sum_{j = 2}^n a_j r^j	&& \text{for all } 0 \leq r < R.
		\end{align}
\end{lemma}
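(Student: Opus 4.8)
The plan is to compute the covariogram $C_u(z) = |\Omega \cap (\Omega + z)|$ directly for $z$ in a small ball, and then integrate over the sphere. The key structural fact is that, as $z$ ranges over a sufficiently small ball $B_R(0)$, the combinatorial type of the intersection $\Omega \cap (\Omega - z)$ does not change: each facet of $\partial\Omega$ is translated to a nearby parallel facet, and for $R$ small enough the only facets that interact are those lying in the same hyperplane or in hyperplanes that already met in $\partial\Omega$. Concretely, I would first pass to the embedding in $[0,1]^n$ and choose $R$ smaller than, say, one third of the minimal distance between any two non-adjacent faces of $\Omega$ (and smaller than the distance from $\Omega$ to the boundary identifications of the torus that are not already incident), so that the symmetric difference $\Omega \triangle (\Omega + z)$ is contained in a fixed neighbourhood of $\partial\Omega$ that decomposes into pieces, one near each facet.

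Next I would write $C_u(0) - C_u(z) = |\Omega \setminus (\Omega + z)| = \frac12 |\Omega \triangle (\Omega + z)|$ (using $|\Omega| = |\Omega + z|$) and analyse $|\Omega \setminus (\Omega + z)|$ facet by facet. Near a facet $F$ with unit outer normal $\nu_F$, the set $\Omega \setminus (\Omega + z)$ is, up to a negligible set supported near the $(n-2)$-skeleton, the region between $F$ and its translate, which is a prism of base $F$ and signed height $(z \cdot \nu_F)_+$; its volume is $\mathcal{H}^{n-1}(F)\,(z\cdot\nu_F)_+$ plus a correction coming from the wedges near the lower-dimensional faces. The crucial point is that these wedge corrections are \emph{homogeneous polynomials in $z$} of degree $2,\dots,n$: a corner where $k$ facets meet contributes a term that scales like $|z|^k$ and is given by integrating a fixed polytopal cross-section, hence is a polynomial in the coordinates of $z$ of degree exactly $k$. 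Thus on $B_R(0)$,
\[
  C_u(z) = \|u\|_{L^1(\T^n)} - \tfrac12 \sum_{F} \mathcal{H}^{n-1}(F)\,|z\cdot\nu_F| + \sum_{j=2}^{n} Q_j(z),
\]
where each $Q_j$ is a homogeneous polynomial of degree $j$ (the degree-$1$ terms combine into the sum over facets because $(z\cdot\nu)_+ + (-z\cdot\nu)_+ = |z\cdot\nu|$, and the contribution of $F$ and that of the oppositely-oriented matching facet in $\Omega + z$ vs $\Omega - z$ together symmetrise).

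Finally I would average over the sphere. Writing $z = rw$ with $w \in \mathbb{S}^{n-1}$, the degree-$j$ homogeneous piece $Q_j(rw) = r^j Q_j(w)$ integrates to $a_j r^j$ with $a_j := \fint_{\mathbb{S}^{n-1}} Q_j(w)\,dw$; the degree-$1$ facet sum integrates to $r \cdot \bigl(-\tfrac12 \sum_F \mathcal{H}^{n-1}(F)\fint_{\mathbb{S}^{n-1}}|w\cdot\nu_F|\,dw\bigr)$, and since $\fint_{\mathbb{S}^{n-1}}|w\cdot\nu|\,dw = \frac{2\omega_{n-1}}{n\omega_n}$ is independent of the unit vector $\nu$ and $\sum_F \mathcal{H}^{n-1}(F) = P[u] = \int_{\T^n}|\nabla u|$, this reproduces exactly the linear coefficient $-\frac{\omega_{n-1}}{n\omega_n}\int_{\T^n}|\nabla u|$ in \eqref{eq:ac}. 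This identification of the linear term is also forced independently by Proposition \ref{prp-autocorrelation_bv} \ref{it-prp-autocorrelation_bv:perimeter}, which is a useful consistency check. The sign $a_2 \geq 0$ follows because $c_u$ is monotone non-increasing near $0$ with $c_u(0)=\|u\|_{L^1}$ and $c_u'(0) = -\frac{\omega_{n-1}}{n\omega_n}\int|\nabla u|$ is the maximal slope (Proposition \ref{prp-autocorrelation_bv}), so $c_u(r) - c_u(0) - c_u'(0) r \geq 0$, i.e. $a_2 r^2 + O(r^3) \geq 0$, forcing $a_2 \geq 0$.

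The main obstacle is the careful bookkeeping near the lower-dimensional skeleton: one must verify that the contributions of the faces of codimension $\geq 2$ genuinely assemble into polynomials with no leftover lower-order or non-polynomial remainder, and that different faces do not interfere once $R$ is chosen small. I would handle this by an explicit local description: near a relatively open face $G$ of dimension $n-k$, $\Omega$ looks (after an affine change of coordinates) like $G \times \mathcal{C}_G$ where $\mathcal{C}_G \subset \R^k$ is a fixed polyhedral cone with apex at $0$, and the local contribution to $|\Omega \setminus (\Omega + z)|$ is $\mathcal{H}^{n-k}(G)$ times $|\mathcal{C}_G \setminus (\mathcal{C}_G + \pi(z))|$ where $\pi$ is the orthogonal projection onto the $k$-dimensional normal space; because $\mathcal{C}_G$ is a cone, $|\mathcal{C}_G \setminus (\mathcal{C}_G + v)|$ is homogeneous of degree $k$ in $v$ and piecewise polynomial, and — this is the one point needing a short argument — it is in fact globally polynomial of degree $k$ on a neighbourhood of $0$ because the relevant "new" region $\mathcal{C}_G \setminus(\mathcal{C}_G+v)$ for small $v$ is cut out by a fixed subset of the defining hyperplanes of $\mathcal{C}_G$. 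Summing $k$ from $1$ to $n$ over all faces and collecting terms of equal degree yields \eqref{eq:ac}, and a compactness argument over the finitely many faces produces a single uniform $R$.
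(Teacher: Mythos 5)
Your proposal follows the same high--level reduction as the paper's proof: show that for small $r$ and generic $w \in \mathbb S^{n-1}$ the volume $|\Omega \bigtriangleup (\Omega + rw)|$ is a polynomial in $r$ of degree at most $n$, average over the sphere, identify the linear coefficient (you do this directly via $\fint_{\mathbb S^{n-1}} |w\cdot\nu|\,dw = \tfrac{2\omega_{n-1}}{n\omega_n}$, the paper reads it off from Proposition \ref{prp-autocorrelation_bv} \ref{it-prp-autocorrelation_bv:perimeter}), and deduce $a_2 \geq 0$ from the extremality of $c_u'(0)$. Where you genuinely diverge is the mechanism for polynomiality: the paper decomposes $\Omega \bigtriangleup \Omega_{rw}$ into finitely many pairwise disjoint bounded convex trapezoids cut out by fixed linear inequalities in $x$ and $x+rw$, and computes each volume by an iterated Fubini slicing, gaining one degree per slice; you stratify by the faces of $\partial\Omega$ and invoke homogeneity of the normal cones, in the spirit of a Steiner--type formula. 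Your route gives the extra information that the degree--$j$ coefficient is carried by the codimension--$j$ skeleton, but it demands the inclusion--exclusion bookkeeping you yourself flag, whereas the paper's trapezoids are disjoint and bounded from the outset. Note also that full polynomiality in $z$ is more than you need: since only the spherical average enters $c_u$, it suffices that for each fixed $w$ off a null set the volume is polynomial in $r$ with $w$--dependent coefficients, exactly as in the paper.

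One formula in your sketch is wrong as written and marks the place where the repair is needed: for a face $G$ of codimension $k \geq 2$, the set $\mathcal C_G \setminus (\mathcal C_G + v)$ has \emph{infinite} Lebesgue measure whenever $\mathcal C_G$ is a genuine cone rather than a half--space (for the quadrant in $\R^2$ it is an unbounded L--shaped strip), so ``$\mathcal H^{n-k}(G)$ times $|\mathcal C_G \setminus (\mathcal C_G + \pi(z))|$'' cannot be the local contribution. The finite, degree--$k$--homogeneous quantity is the \emph{correction}: the defect, inside a window of radius $O(|z|)$ around $G$, between the true symmetric difference and what has already been attributed to the adjacent higher--dimensional faces. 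Until that localized, inclusion--corrected cone formula is stated and summed without double counting, the argument is a plausible outline rather than a proof; the paper's disjoint trapezoid decomposition is precisely the device that avoids having to do this.
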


	\begin{figure}[t] \begin{center}
		\includegraphics[scale=.18]{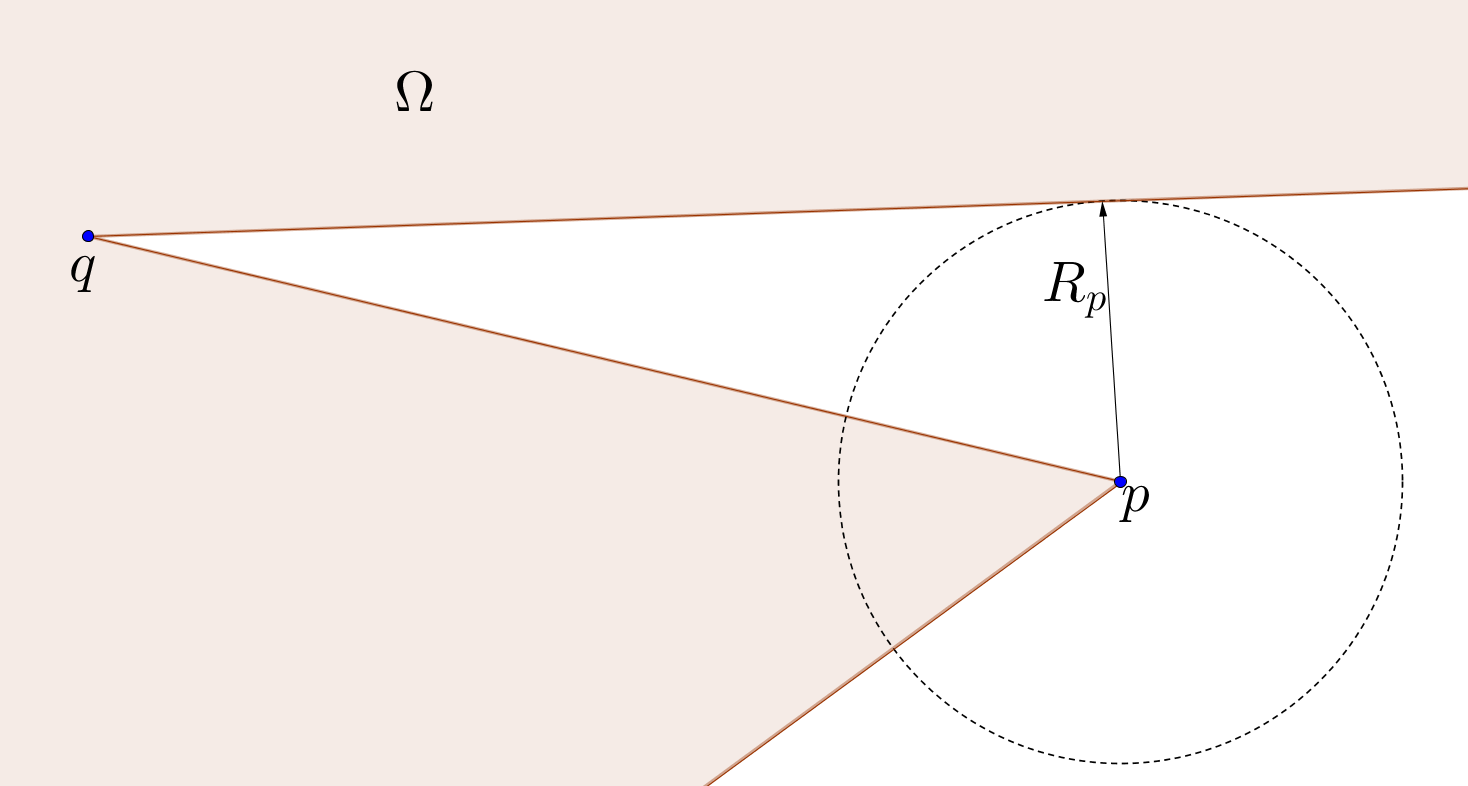}
		\caption{The quantity $R$ ensures that the corners of $\Omega \bigtriangleup \Omega_{rw}$ do not intersect faces which are not adjacent to them.} \label{fig:r_p_explanation}
              \end{center} \end{figure}

\begin{proof}
  In \cite[Lemma 2.5]{KnuShi:2021} it was shown that \eqref{eq:ac} holds for
  stripes with $a_2, \ldots, a_n = 0$. Since the only polytopes which do not
  have corners are stripes, we can without loss of generality assume that
  $\Omega$ has at least one corner. We denote the set of corners of $\Omega$ by
  $\CC$. We define
  \begin{align} \label{def-Rpoly} %
    R := \frac 12 \min \Big\{ \min_{p \in \CC} R_p,
    \min_{\substack{p,q \in \CC \\ p \neq q}} \dist(p, q) \Big\}  %
    >  0,
  \end{align}
  where for any $p \in \CC$ we set
  \begin{align}
    R_p	& := \sup \big\{ \varrho > 0 : \substack{B_\varrho(p) \cap \Omega \text{ and } B_\varrho(p) \cap \Omega^c \text{ have}\\ \text{only one connected component}} \big\} > 0.
  \end{align}
Let $r > 0$,
$w \in \mathbb S^{n - 1}$ and let $\Omega_{rw} := \Omega + rw$. Then
  \begin{align} \label{eq:covariogram_difference} C_u(0) - C_u(rw) =
    |\Omega_{rw} \setminus \Omega| = |\Omega \setminus \Omega_{rw}| = \frac 12
    |\Omega \bigtriangleup \Omega_{rw}|,
  \end{align}
  where $\bigtriangleup$ denotes the symmetric difference.
	Since $C_u(0) = \|u\|_{L^1(\T^n)} = |\Omega|$ by Proposition
    \ref{prp-autocorrelation_bv} \ref{it-prp-autocorrelation_bv:origin} it
    hence remains to calculate $|\Omega \bigtriangleup \Omega_{rw}|$ and its
    dependence on $r$.	

  \medskip

  We assume that $w \not\in \SS$, where $\SS \subset \mathbb S^{n-1}$ is the negligible 
    set of vectors which are tangential to any of the (finitely many) faces of
    $\Omega$.  By the choice of $R$, the number of faces $N$ of
  $\Omega \bigtriangleup \Omega_{rw}$ is then is independent of $r \in (0,R)$
  and independent of $w$ (see Figure \ref{fig:r_p_explanation}).  Furthermore, 
  there exist convex  trapezoids
  $T_{rw}^{(1)},\ldots T_{rw}^{(N)} \subset \T^n$ with pairwise disjoint
  interiors such that (see Figure \ref{fig:trapeziums})
  \begin{align} \label{eq:covariogram_trapeziums} \Omega \bigtriangleup
    \Omega_{rw} = \bigcup_{i = 1}^N T_{rw}^{(i)} && \text{for all } 0 < r < R, \ w
    \in \mathbb S^{n - 1} \setminus \SS.
  \end{align}
  Each of these convex trapezoids
  $T_{rw}^{(i)}$, $i \in \{1,\ldots, N\}$ is determined by a system of linear
  inequalities
  \begin{align}\label{eq:equation_trapeziums}
    T_{rw}^{(i)} = \{x \in \T^n : A^{(i)}x \leq b^{(i)}, \ A^{(i)}[x + rw] \leq b^{(i)}\},
  \end{align}
  where for $x,y \in \T^n$ we denote $x \leq y$ if and only if
  $x_j \leq y_j \! \mod 1$ for every $j \in \{1,\ldots,n\}$ and for some matrices
  $A^{(i)} \in \R^{2N \times n}$ and vectors $b^{(i)} \in \T^{2N}$. By a slicing argument there exist
$d_1^{(i)}(w), \ldots, d_n^{(i)}(w) \in \R$ such that (see Fig.~\ref{fig:trapezium_order})
  \begin{align} \label{eq:trapezium_formula} |T_{rw}^{(i)}| = \sum_{k = 1}^n
    d_k^{(i)} \! (w) \, r^k.
  \end{align}

    \begin{figure}[t] \begin{center} \includegraphics[width =
      \linewidth]{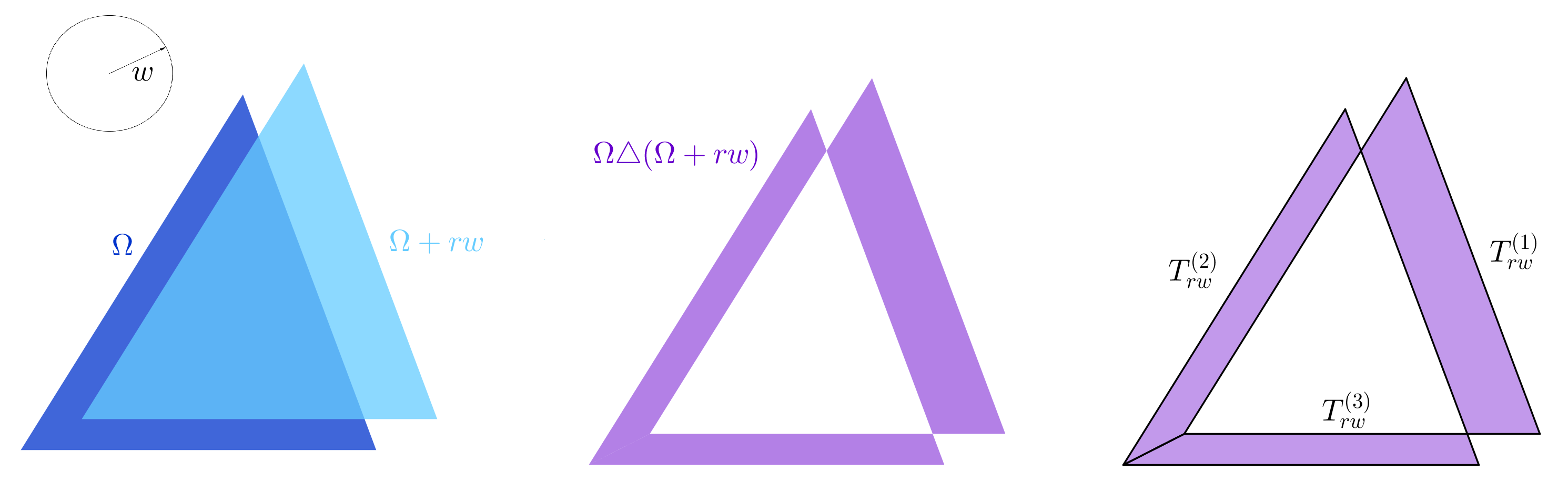}
      \caption{Construction of the trapezoids $T_{rw}^{(i)}$.} \label{fig:trapeziums}
    \end{center} \end{figure}

More precisely, for $x = (x_1,x')$, there exist at most $2N$ intervals $I_{m_1} = [f_{m_1}(x', rw), g_{m_1}(x',rw)]$, where $f_{m_1}$ and $g_{m_1}$ depend in an affine way on $x'$ and $rw$, and convex polytopes $K_{m_1} \subset \T^{n - 1}$ such that
	\begin{align}
			|T_{rw}^{i}| = \sum_{m_1 = 1}^{2N} \int_{K_{m_1}} \int_{I_{m_1}} \ dx_1 \ dx' =  \sum_{m_1 = 1}^{2N} \int_{K_{m_1}} g_{m_1}(x',rw) - f_{m_1}(x',rw)\ dx'.
		\end{align}
	The integrand is now a polynomial in $r$ of degree $1$ and the set of integration is again a convex polytope of the form \eqref{eq:equation_trapeziums} (reduced in dimension, with different matrices $A^{(i)}$ and $b^{(i)}$). Iteratively, we obtain in the $l$-th step convex polytopes $K_{m_l}$ and a polynomial $f_{m_1,\ldots,m_l}$ of degree $l$ such that
		\begin{align}
			|T_{rw}^{i}| = \sum_{m_1 = 1}^{2N} \dots \sum_{m_l = 1}^{2N} \int_{K_{m_l}} f_{m_1,\ldots,m_l}(x_{l + 1},...,x_n, rw) \ d(x_{l + 1},...,x_n).
		\end{align}
	The algorithm terminates after $n$ steps and leaves a polynomial in $r$ of degree $n$ as described in \eqref{eq:trapezium_formula}. The assertion now follows from \eqref{eq:trapezium_formula} by combining the
  above identities, i.e.
  \begin{align}
    C_u(rw)	%
       = |\Omega| -  \frac 12 \sum_{i = 1}^N |T_{rw}^{(i)}|	\ %
    & =\|u\|_{L^1(\T^n)} - \frac 12 \sum_{i = 1}^N \sum_{k = 1}^n d_k^{(i)}(w) r^k
  \end{align}
  for all $r \in (0,R)$. Averaging with respect to $w$ then yields
  \begin{align} \label{expr-pol} %
    c_u(r) = \|u\|_{L^1(\T^n)} + \sum_{j = 1}^n a_j r^j && \text{for all } 0
                                                           \leq r < R
  \end{align}
  for some $a_1,\ldots,a_n \in \R$.  Proposition \ref{prp-autocorrelation_bv}
  \ref{it-prp-autocorrelation_bv:perimeter} shows that
  $a_1 = - \frac{\omega_{n - 1}}{n \omega_n} P[u]$. By \eqref{expr-pol} we have
  $c''(0) = 2 a_2$. In view of Proposition \ref{prp-autocorrelation_bv}
  \ref{it-prp-autocorrelation_bv:perimeter} we have $c''(0) \geq 0$ and hence
  $a_2 \geq 0$.
  
  \begin{figure}[t] \begin{center} \includegraphics[scale=.2]{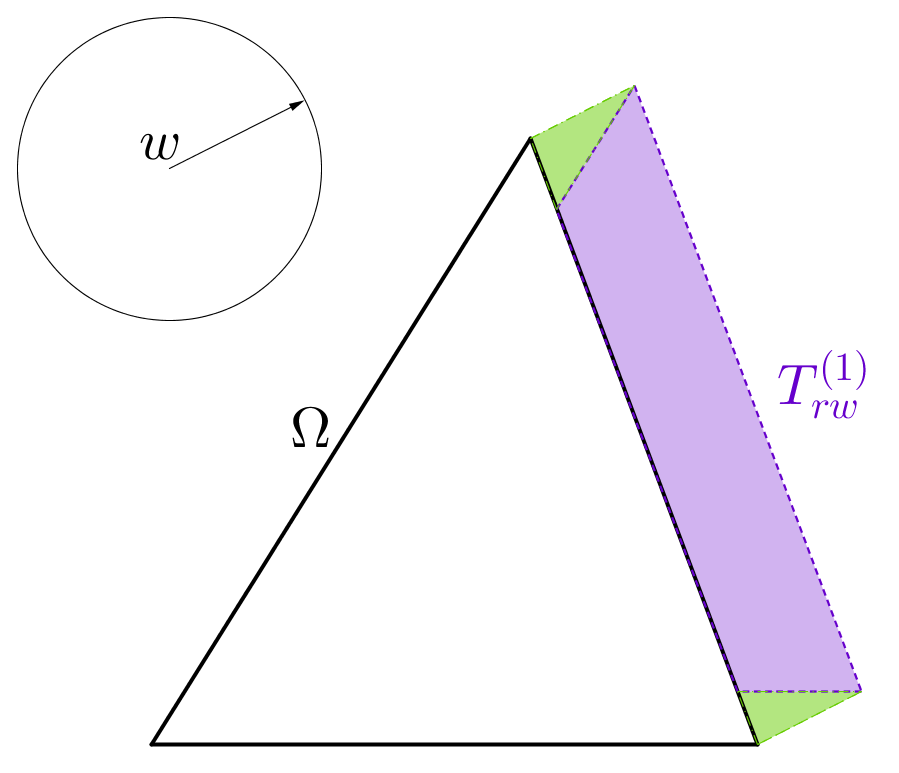}
      \caption{Sketch of the polynomial dependence on $r$ of $|T_{rw}^{(1)}|$. The height of the trapezium is given by $h = r|\nu \cdot w|$, where $\nu$ is the outward normal of the corresponding face. The volume of the green triangles is of order $r^2$.} \label{fig:trapezium_order}
    \end{center} \end{figure}
\end{proof}

\begin{remark}[Full space setting] %
	We note that with analogous arguments the result of Lemma
  \ref{lemma:polytope_autocorrelation} also holds in the full space
  setting.
\end{remark}

\section{Compactness and \texorpdfstring{$\Gamma$--convergence}{Gamma--convergence}} \label{sec:proofs}

\subsection{Reformulation of the energy}

For the subsequent analysis it is convenient to use the integrated kernel
\begin{align} \label{def-Phi} %
  \Phi(r) \ %
  := \ \sigma_n \int_r^\infty \rho^{n - 1} \KK(\rho) \ d\rho \ %
  = \ \int_{B_r(0)^c} \KK(z) \ dz && \text{for all } r > 0,
  \end{align}
  instead of the kernel itself.  We collect some basic properties of $\Phi$
  which follow from our assumptions on $\KK$:
\begin{proposition}[Basic properties of $\Phi$] \label{prp-kprop}
  Assume $\KK$ satisfies \hyp1 -- \hyp3. Let $\Phi: (0,\infty) \longrightarrow \R$
  be given by \eqref{def-Phi} and let $\Phi_\eps(r) := \frac 1 \eps \Phi(\frac r\eps)$.
  Then $\Phi \in L^1(0,\infty)$ with $\Phi \geq 0$. Furthermore,
  \begin{enumerate}
  \item \label{it-prp-limits} $\displaystyle \lim_{r\to \infty} r \Phi(r) %
    = \lim_{r\to 0} r \Phi(r) = 0$.
   \item \label{it-prp-kprop:l1}
     $\displaystyle \int_0^\infty \Phi(r) \ dr = \int_{\Rn} |z| \, \KK(z) \ dz < \infty$.
  \item \label{it-prp-kprop:moment}
    $\displaystyle \int_0^1 r^j \Phi_\eps(r) \ dr \xrightarrow{\eps \to 0} 0$ \quad for $j \geq 1$.
  \end{enumerate}
\end{proposition}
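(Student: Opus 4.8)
The plan is to deduce everything from Tonelli's theorem together with elementary tail estimates for the weight $\rho^n \KK(\rho)$, which belongs to $L^1(0,\infty)$ by \hyp2. The non-negativity $\Phi \geq 0$ is immediate from \hyp3 and the second representation in \eqref{def-Phi}, and for each fixed $r>0$ the bound $\int_r^\infty \rho^{n-1}|\KK(\rho)|\,d\rho \leq r^{-1}\int_r^\infty \rho^n|\KK(\rho)|\,d\rho < \infty$ shows that $\Phi$ is indeed real-valued. The one point requiring a little care is that $\KK$ is only assumed to be signed, so $\Phi$ need not be monotone and Fubini/Tonelli has to be applied to the absolute value first.

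For $\Phi \in L^1(0,\infty)$ together with \ref{it-prp-kprop:l1}, I would integrate the first representation of $\Phi$ over $r \in (0,\infty)$ and interchange the order of integration. This is legitimate because, writing the region of integration as $\{0<r<\rho\}$,
\begin{align*}
  \int_0^\infty \int_r^\infty \rho^{n-1}\, |\KK(\rho)| \ d\rho \ dr \ = \ \int_0^\infty \rho^{n}\, |\KK(\rho)| \ d\rho \ = \ \frac{1}{\sigma_n} \int_{\Rn} |z|\,|\KK(z)| \ dz \ < \ \infty,
\end{align*}
where the middle equality uses \hyp1 (polar coordinates) and finiteness is \hyp2. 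Tonelli then gives $\int_0^\infty \Phi(r)\,dr = \sigma_n \int_0^\infty \rho^{n}\KK(\rho)\,d\rho = \int_{\Rn}|z|\,\KK(z)\,dz$, and since $\Phi \geq 0$ this is exactly $\|\Phi\|_{L^1(0,\infty)}$; both claims follow at once.

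For \ref{it-prp-limits} I would start from $0 \leq r\Phi(r) \leq \sigma_n r \int_r^\infty \rho^{n-1}|\KK(\rho)| \ d\rho$. On the range of integration $r \leq \rho$, so the right-hand side is bounded by $\sigma_n \int_r^\infty \rho^{n}|\KK(\rho)| \ d\rho$, which tends to $0$ as $r \to \infty$ because $\rho^{n}\KK \in L^1(0,\infty)$; this is the limit at infinity. For the limit at the origin, fix $\delta > 0$ and split the integral at $\delta$: the contribution of $(r,\delta)$ is again at most $\sigma_n \int_0^\delta \rho^{n}|\KK(\rho)| \ d\rho$ (using $r \leq \rho$), while the contribution of $(\delta,\infty)$ is at most $\sigma_n r\,\delta^{-1}\int_\delta^\infty \rho^{n}|\KK(\rho)| \ d\rho \to 0$ as $r \to 0$. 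Hence $\limsup_{r\to 0} r\Phi(r) \leq \sigma_n \int_0^\delta \rho^{n}|\KK(\rho)| \ d\rho$, and letting $\delta \to 0$ and invoking absolute continuity of the integral finishes the argument.

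Finally, for \ref{it-prp-kprop:moment}, since $r^j \leq r$ on $(0,1)$ whenever $j \geq 1$ and $\Phi_\eps \geq 0$, it suffices to treat $j = 1$. The substitution $s = r/\eps$ gives $\int_0^1 r\,\Phi_\eps(r) \ dr = \eps \int_0^{1/\eps} s\,\Phi(s) \ ds$, and splitting at a fixed $A > 0$,
\begin{align*}
  \eps \int_0^{1/\eps} s\,\Phi(s) \ ds \ \leq \ \eps A\, \|\Phi\|_{L^1(0,\infty)} \ + \ \int_A^\infty \Phi(s) \ ds,
\end{align*}
where on $(A,1/\eps)$ we used $\eps s \leq 1$. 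Letting $\eps \to 0$ for fixed $A$ kills the first term, and then $A \to \infty$ kills the second because $\Phi \in L^1(0,\infty)$; a routine $\eps/2$ argument concludes. The only genuinely delicate point in the whole proof is the behaviour near the origin in \ref{it-prp-limits}: the singularity of $\KK$ there rules out crude pointwise bounds and forces the split-at-$\delta$ device, whereas the rest is bookkeeping with Tonelli and integrable tails.
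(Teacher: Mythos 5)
Your proof is correct, and for part \ref{it-prp-kprop:l1} it takes a genuinely different route from the paper. The paper first differentiates the integrated kernel, $\Phi'(r) = -\sigma_n r^{n-1}\KK(r)$, and obtains the identity $\int_0^\infty \Phi(r)\,dr = \int_{\Rn}|z|\,\KK(z)\,dz$ by integrating $r\Phi'(r)$ by parts on $(\varrho, R)$ and sending $\varrho \to 0$, $R\to\infty$; this makes the boundary-term limits of part \ref{it-prp-limits} a logical prerequisite for part \ref{it-prp-kprop:l1}. You instead apply Tonelli--Fubini to the double integral over $\{0 < r < \rho\}$, which yields $L^1$-membership and the identity in one stroke without any appeal to \ref{it-prp-limits}; the price is that you must verify absolute integrability of the weight first, which you correctly do from (H2). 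Your treatment of the limit at the origin in \ref{it-prp-limits} (split at $\delta$, use $r\le\rho$ on the inner piece and $r/\delta \to 0$ on the outer piece) is an explicit, elementary version of the paper's one-line reverse-Fatou/dominated-convergence argument, and your proof of \ref{it-prp-kprop:moment} (reduce to $j=1$, substitute $s = r/\eps$, split at a fixed $A$) supplies details that the paper compresses into the remark that it ``holds since $\Phi \in L^1(0,\infty)$''. Both approaches are sound; yours decouples the three assertions more cleanly, while the paper's integration-by-parts identity \eqref{eq:Phi_integrable} is reused almost verbatim in the proof of Lemma \ref{lem:reformulation_non_local}, which is presumably why the authors set it up that way.
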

\begin{proof}
  By definition of $\Phi$ and \hyp3 we have $\Phi \geq 0$ and
    \begin{align} \label{kernel-properties-derivative} %
      \Phi'(r) \ = \ -\sigma_n r^{n - 1} \KK(r) \qquad \text{ for all $r > 0$}.
    \end{align}
   By definition of $\Phi$ and assumption \hyp2 we have
   \begin{align} \label{eq:Phi_upper_boundary} |R \Phi(R)| \leq \sigma_n R
     \int_R^\infty \rho^{n - 1} |\KK(\rho)| \ d\rho \ %
     \stackrel{\eqref{def-Phi}}\leq \ \sigma_n \int_R^\infty \rho^n |\KK(\rho)| \ d\rho
     \ %
     \to \ 0
   \end{align}
   as $R \to \infty$. Moreover, from Fatou's Lemma we obtain
   \begin{align} \label{eq:Phi_lower_boundary} %
     \limsup_{\varrho \to 0} \varrho \Phi(\varrho) %
     \leq \int_0^\infty \rho^{n - 1} |\KK(\rho)| \bigg( \limsup_{\varrho \to
     0} \varrho \chi_{(\varrho,\infty)}(\rho) \bigg) \ d\rho = 0,
   \end{align}
   which shows that \ref{it-prp-limits} holds. Integrating by parts we get
     for $0 < \varrho < R < \infty$
     \begin{align} \label{eq:Phi_integrable} %
       \sigma_n \int_\varrho^R r^n \KK(r) \ dr \ %
       &\stackrel{\eqref{kernel-properties-derivative}}{=} -\int_\varrho^R r \Phi'(r)
         \ dr = -r \, \Phi(r) \bigg|_\varrho^R + \int_\varrho^R \Phi(r) \ dr,
     \end{align}
     which yields \ref{it-prp-kprop:l1} in the limits $\varrho \to 0$ and
     $R \to \infty$ by \ref{it-prp-limits} and the Dominated Convergence
     Theorem.  Assertion \ref{it-prp-kprop:moment} holds since
     $\Phi \in L^1(0,\infty)$.
 \end{proof}
 We are ready to rewrite the nonlocal term in $E_{\gamma,\eps}$ in terms of the
 symmetrised autocorrelation function and the integrated kernel $\Phi$:
\begin{lemma}[Representation of energy by autocorrelation
  function] \label{lem:reformulation_non_local} Let $\gamma, \eps > 0$ and
  suppose $\KK$ satisfies \hyp1--\hyp3. Let $\Phi$ be given by \eqref{def-Phi} and let $\Phi_\eps(r) := \frac 1 \eps \Phi(\frac r\eps)$.
  Then
  \begin{align} \label{eq:represenatation_non_local} E_{\gamma,\eps}[u] =
    2\gamma_{\rm crit} \int_0^{\infty} \Phi_\eps(r) \,
    \bigg[\frac{\gamma}{\gamma_{\rm crit}} c'_u(r) - c'(0) \bigg] \ dr &&
    \text{for all } u \in \AA,
  \end{align}
  where $\gamma_{\rm crit}$ is represented in terms of $\Phi$ via
 \begin{align} \label{gam-Phi} %
   \gamma_{\rm crit} \ %
   = \ \frac{n \omega_n}{2\omega_{n - 1}}
   \|\Phi\|_{L^1(0,\infty)}^{-1}.
 \end{align}
\end{lemma}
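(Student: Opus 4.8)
The plan is to eliminate the absolute value in the non--local term, use translation invariance on $\T^n$ to make the autocorrelation function appear, reduce to a one--dimensional radial integral via \hyp1, and then integrate by parts to trade the kernel $\KK_\eps$ for its primitive $\Phi_\eps$ and the increment $c_u(0)-c_u(r)$ for $c_u'$.

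First I would use that every $u\in BV(\T^n,\{0,1\})$ satisfies $u^2=u$, so $|u(x+z)-u(x)| = u(x+z)+u(x)-2\,u(x+z)\,u(x)$. Integrating in $x$ over $\T^n$ and invoking translation invariance together with $\int_{\T^n}u = \theta = C_u(0)$ (Proposition \ref{prp-autocorrelation_bv}\ref{it-prp-autocorrelation_bv:origin}) yields
\begin{align*}
  \int_{\T^n}|u(x+z)-u(x)|\,dx = 2\big(C_u(0)-C_u(z)\big) \qquad \text{for all } z\in\R^n .
\end{align*}
Because $C_u$ is Lipschitz (Proposition \ref{prp-autocorrelation_bv}), the left--hand side is $\lesssim\min\{1,|z|\}$, and since $|z|\,\KK_\eps(z)\in L^1(\R^n)$ (which is \hyp2 after rescaling) the non--local double integral in \eqref{eq:definition_energy} is absolutely convergent, so Fubini applies. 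Substituting the identity above and passing to polar coordinates (legitimate by \hyp1, and using that $C_u(0)$ is constant on spheres together with the definition of $c_u$) rewrites the non--local term as
\begin{align*}
  \frac\gamma\eps\int_{\R^n}\KK_\eps(z)\int_{\T^n}|u(x+z)-u(x)|\,dx\,dz = \frac{2\gamma\sigma_n}\eps\int_0^\infty \rho^{n-1}\KK_\eps(\rho)\big(c_u(0)-c_u(\rho)\big)\,d\rho .
\end{align*}

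The central step is the integration by parts. Writing $\widetilde\Phi_\eps(r):=\Phi(r/\eps)=\eps\,\Phi_\eps(r)$, the rescaled version of \eqref{kernel-properties-derivative} reads $\widetilde\Phi_\eps'(r)=-\sigma_n r^{n-1}\KK_\eps(r)$; on any $[\delta,M]\subset(0,\infty)$ the function $\widetilde\Phi_\eps$ is absolutely continuous (its derivative is integrable there by \hyp2, since $\rho$ is bounded away from $0$) and $c_u$ is Lipschitz. Integrating by parts on $[\delta,M]$ produces boundary contributions $\pm\widetilde\Phi_\eps(\delta)(c_u(0)-c_u(\delta))$ and $\mp\widetilde\Phi_\eps(M)(c_u(0)-c_u(M))$; using $|c_u(0)-c_u(\delta)|\le\|c_u'\|_{L^\infty}\delta$ near $0$ and $|c_u(0)-c_u(M)|\le c_u(0)$ at infinity, these are bounded by $\delta\,\Phi(\delta/\eps)=\eps\,(\delta/\eps)\Phi(\delta/\eps)$ and $c_u(0)\,\Phi(M/\eps)$, both of which vanish in the limits $\delta\to0$, $M\to\infty$ by Proposition \ref{prp-kprop}\ref{it-prp-limits}. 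Taking those limits (with dominated convergence for the remaining integral, since $\widetilde\Phi_\eps\in L^1(0,\infty)$ and $c_u'\in L^\infty$) gives
\begin{align*}
  \frac{2\gamma\sigma_n}\eps\int_0^\infty \rho^{n-1}\KK_\eps(\rho)\big(c_u(0)-c_u(\rho)\big)\,d\rho = -2\gamma\int_0^\infty \Phi_\eps(\rho)\,c_u'(\rho)\,d\rho ,
\end{align*}
hence $E_{\gamma,\eps}[u] = P[u] + 2\gamma\int_0^\infty \Phi_\eps(\rho)\,c_u'(\rho)\,d\rho$.

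It then remains to rewrite the perimeter. Proposition \ref{prp-autocorrelation_bv}\ref{it-prp-autocorrelation_bv:perimeter} gives $P[u] = -\tfrac{n\omega_n}{\omega_{n-1}}\,c_u'(0)$, and Proposition \ref{prp-kprop}\ref{it-prp-kprop:l1} identifies $\int_{\R^n}|z|\,\KK(z)\,dz = \|\Phi\|_{L^1(0,\infty)}$, which is precisely \eqref{gam-Phi} and turns the constant $\tfrac{n\omega_n}{\omega_{n-1}}$ into $2\gamma_{\rm crit}\|\Phi\|_{L^1(0,\infty)} = 2\gamma_{\rm crit}\int_0^\infty\Phi_\eps(\rho)\,d\rho$ (the last equality by the substitution $\rho\mapsto\rho/\eps$). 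Thus $P[u] = -2\gamma_{\rm crit}\int_0^\infty\Phi_\eps(\rho)\,c_u'(0)\,d\rho$, and combining this with the previous display and writing $2\gamma = 2\gamma_{\rm crit}\cdot\tfrac{\gamma}{\gamma_{\rm crit}}$ gives exactly \eqref{eq:represenatation_non_local}. The only genuinely delicate point is the integration by parts: \hyp2 permits $\rho^{n-1}\KK(\rho)$, i.e.\ $\Phi'$, to fail to be integrable near the origin, so one cannot integrate by parts on $(0,\infty)$ directly but must truncate and exploit the two--sided decay $r\Phi(r)\to0$ of Proposition \ref{prp-kprop}\ref{it-prp-limits} to discard the boundary terms; everything else is routine (absolute convergence, Fubini, polar coordinates).
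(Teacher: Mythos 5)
Your proposal is correct and follows essentially the same route as the paper: eliminate the absolute value via $u^2=u$ to produce $2(C_u(0)-C_u(z))$, pass to polar coordinates using \hyp1, integrate by parts on a truncated interval using $\Phi'(r)=-\sigma_n r^{n-1}\KK(r)$, kill the boundary terms via $r\Phi(r)\to 0$ from Proposition \ref{prp-kprop} \ref{it-prp-limits}, and finally rewrite $P[u]$ through $c_u'(0)$ and $\|\Phi\|_{L^1}$ to assemble \eqref{eq:represenatation_non_local}. The only cosmetic difference is that you bound the boundary term at infinity by $c_u(0)\Phi(M/\eps)$ instead of the paper's uniform Lipschitz bound $|c_u(0)-c_u(r)|\lesssim r\,P[u]$ at both ends; both are valid.
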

\begin{proof}
  We first note that the representation \eqref{gam-Phi} follows from
    Proposition \ref{it-prp-kprop:l1} together with \eqref{def-gam}. Since $u$ only takes values in $\{0,1\}$ we have with Proposition
  \ref{prp-autocorrelation_bv} \ref{it-prp-autocorrelation_bv:origin} 
  \begin{align}
      \frac 1\eps \int_{\R^n} \KK_\eps(z) & \int_{\T^n} |u(x) - u(x + z)| \ dx \ dz	\\[6pt]       
                           & = \frac 1\eps \int_{\R^n} \KK_\eps(z) \int_{\T^n} |u(x) - u(x + z)|^2 \ dx \ dz	\\[6pt]
                           & = \frac 2\eps \int_{\R^n} \KK_\eps(z) \bigg[ \int_{\T^n} |u(x)|^2 \ dx - \int_{\T^n} u(x) \, u(x + z) \ dx \bigg] \ dz		\\[6pt]
                           & = \frac 2\eps \int_{\R^n} \KK_\eps(z) \, [ C_u(0) - C_u(z)] \ dz	\\[6pt]
                           & = \frac{2\sigma_n}{\eps^{n + 1}} \int_0^\infty r^{n - 1} \KK(\tfrac r\eps) \, [c_u(0) - c_u(r)] \ dr,
  \end{align}
  where in the last line we switched to polar coordinates. Let
  $0 < \varrho < R < \infty$. Noting that $c_u$ is Lipschitz (Proposition
  \ref{prp-autocorrelation_bv}), we integrate by parts to obtain
  \begin{align}
   \frac{2\sigma_n}{\eps^{n + 1}} & \int_\varrho^R r^{n - 1} \KK(\tfrac r\eps) \, [c_u(0) - c_u(r)] \ dr	\\[6pt]
   							& \kern -4pt\stackrel{\eqref{kernel-properties-derivative}}= -\frac{2}{\eps} \int_\varrho^R [\Phi(\tfrac r\eps)]' \, [c_u(0) - c_u(r)] \ dr	\\
    &= \ -2 \Phi_\eps(r) \, [c_u(0) -  c_u(r)] \bigg|_\varrho^R - 2 \int_\varrho^R \Phi_\eps(r) \, c'_u(r) \ dr,
      \label{eq:reformulation_integration_by_parts}
  \end{align}
  noting that $\Phi_\eps(r) = \frac 1\eps \Phi(\frac r\eps)$. Using Proposition \ref{prp-autocorrelation_bv}
  \ref{it-prp-autocorrelation_bv:perimeter} we obtain
  \begin{align}
    |\Phi_\eps(r) \, [c_u(0) -  c_u(r)]| \lesssim r \Phi_\eps(r)  \, P[u]	&& \text{for all } r > 0.
  \end{align}
  Thus the boundary terms in \eqref{eq:reformulation_integration_by_parts}
  vanish in the limit $\varrho \to 0$ and $R \to \infty$ by Proposition
  \ref{prp-kprop} \ref{it-prp-limits}. It follows from the Dominated Convergence
  Theorem and the computations above that
  \begin{align} \label{eq:non_local_term_Phi_representation} 
  	\frac 1\eps  \int_{\R^n} \KK_\eps(z) \int_{\T^n} |u(x) - u(x + z)| \ dx =
    -2\int_0^\infty \Phi_\eps(r) \, c'_u(r) \ dr.
  \end{align}
  The claim follows by inserting \eqref{eq:non_local_term_Phi_representation}
  into \eqref{eq:definition_energy} and using Proposition \ref{prp-autocorrelation_bv}
  \ref{it-prp-autocorrelation_bv:perimeter} to rewrite $P[u]$ in terms of
  $c'_u(0)$.
\end{proof}

\subsection{Proofs of the main theorems} \label{subsec:proofs}

We next use the reformulation of the energy $E_{\gamma,\eps}$ from Lemma
\ref{lem:reformulation_non_local} to show that the limit energy is a pointwise
lower bound. This will simplify the proof of the compactness in Theorem
\ref{thm:compactness} and the liminf inequality of Theorem
\ref{thm:convergence}. It relies on the bound
$c_u'(0) = \min_{r \geq 0} c_u'(r)$ for all $u \in \AA$ (Prop.
\ref{prp-autocorrelation_bv} \ref{it-prp-autocorrelation_bv:perimeter}).

\begin{proposition}[Lower bound for the
  energy] \label{prp-pointwise_bound_energy} Let $\gamma, \eps > 0$ and suppose
  that $\KK$ satisfies \hyp1--\hyp3. Then
  \begin{align} \label{eq:pointwise_lower_bound}
    E_{\gamma,0}[u] \leq E_{\gamma,\eps}[u]	&& \text{for all } u \in \AA.
  \end{align}
\end{proposition}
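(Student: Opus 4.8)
The plan is to read off the claim directly from the reformulation of the energy in Lemma~\ref{lem:reformulation_non_local}, using the single structural fact that $c_u'$ attains its minimum over $[0,\infty)$ at the origin. Concretely, for $u\in\AA$ Lemma~\ref{lem:reformulation_non_local} gives
\begin{align*}
E_{\gamma,\eps}[u] = 2\gamma_{\rm crit}\int_0^\infty \Phi_\eps(r)\Big[\tfrac{\gamma}{\gamma_{\rm crit}}\,c_u'(r)-c_u'(0)\Big]\ dr ,
\end{align*}
while Proposition~\ref{prp-autocorrelation_bv}~\ref{it-prp-autocorrelation_bv:perimeter} tells us that $c_u'(0)=-\|c_u'\|_{L^\infty(0,\infty)}=-\tfrac{\omega_{n-1}}{n\omega_n}P[u]$; in particular $c_u'(r)\ge c_u'(0)$ for every $r\ge 0$.

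First I would use that $\Phi_\eps\ge 0$ (Proposition~\ref{prp-kprop}) and $\gamma/\gamma_{\rm crit}>0$, so that replacing $c_u'(r)$ by the smaller quantity $c_u'(0)$ in the bracket decreases the integrand pointwise:
\begin{align*}
\tfrac{\gamma}{\gamma_{\rm crit}}\,c_u'(r)-c_u'(0)\ \ge\ \Big(\tfrac{\gamma}{\gamma_{\rm crit}}-1\Big)c_u'(0)\qquad\text{for all }r>0 .
\end{align*}
Integrating this against $\Phi_\eps$ and noting $\int_0^\infty\Phi_\eps(r)\ dr=\int_0^\infty\Phi(r)\ dr=\|\Phi\|_{L^1(0,\infty)}$ by the substitution $r\mapsto r/\eps$, I obtain
\begin{align*}
E_{\gamma,\eps}[u]\ \ge\ 2\gamma_{\rm crit}\,\|\Phi\|_{L^1(0,\infty)}\Big(\tfrac{\gamma}{\gamma_{\rm crit}}-1\Big)c_u'(0) .
\end{align*}

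Finally I would insert $2\gamma_{\rm crit}\|\Phi\|_{L^1(0,\infty)}=\tfrac{n\omega_n}{\omega_{n-1}}$, which is exactly the representation \eqref{gam-Phi} of $\gamma_{\rm crit}$, together with $c_u'(0)=-\tfrac{\omega_{n-1}}{n\omega_n}P[u]$; the dimensional constants cancel and the right-hand side collapses to $\bigl(1-\tfrac{\gamma}{\gamma_{\rm crit}}\bigr)P[u]=E_{\gamma,0}[u]$ (here $u\in\AA$, so this is the finite branch of \eqref{eq:definition_limit_energy}). That finishes the proof. I do not expect any genuine obstacle: the whole content is the sign information $c_u'\ge c_u'(0)$ on $[0,\infty)$ and $\Phi_\eps\ge 0$, both already established, and the argument never uses $\gamma\le\gamma_{\rm crit}$, so the bound in fact holds for every $\gamma>0$. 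The only points requiring a little care are the bookkeeping of the constants $\omega_n,\omega_{n-1}$ and the $\eps$-scaling of $\Phi_\eps$.
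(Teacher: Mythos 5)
Your argument is correct and is essentially the paper's own proof: both rest on the reformulation of Lemma~\ref{lem:reformulation_non_local}, the minimality $c_u'(r)\ge c_u'(0)$ from Proposition~\ref{prp-autocorrelation_bv}~\ref{it-prp-autocorrelation_bv:perimeter}, the nonnegativity of $\Phi_\eps$, and the identity \eqref{gam-Phi}; you merely run the chain of inequalities from $E_{\gamma,\eps}$ down to $E_{\gamma,0}$ rather than from $E_{\gamma,0}$ up to $E_{\gamma,\eps}$. Your side remark that the bound holds for every $\gamma>0$ without assuming $\gamma\le\gamma_{\rm crit}$ is also consistent with the proposition as stated.
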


\begin{proof}
  By Proposition \ref{prp-kprop} \ref{it-prp-kprop:moment}, Proposition
  \ref{prp-autocorrelation_bv} \ref{it-prp-autocorrelation_bv:perimeter} and
  Lemma \ref{lem:reformulation_non_local} we obtain
  \begin{align}
    E_{\gamma,0}[u] \ %
    &\stackrel{\eqref{gam-Phi}}{=} 2\gamma_{\rm crit} \int_0^{\infty} \Phi_\eps(r) \, \bigg[ \frac{\gamma}{\gamma_{\rm crit}} c'_u(0) - c_u'(0) \bigg] \ dr\\[6pt]
    &\leq \ 2\gamma_{\rm crit} \int_0^{\infty} \Phi_\eps(r) \, \bigg[\frac{\gamma}{\gamma_{\rm crit}} c'_u(r) - c_u'(0) \bigg] \ dr \ = \ E_{\gamma,\eps}[u].
  \end{align}
\end{proof}
The next proposition is a convergence result and simplifies the
non--compact-ness (Theorem \ref{thm:compactness}
\ref{it-thm:compactness:non_compactness}) and limsup inequality of Theorem
\ref{thm:convergence}. The key observation is that the family of integral
kernels $\|\Phi\|_{L^1(\R_+)}^{-1} \Phi_\eps$ forms an approximation of the
identity, so we formally achieve with the reformulation of the energy (see Lemma
\ref{lem:reformulation_non_local})
	\begin{align}
		E_{\gamma,\eps}[u]	& = 2\gamma_{\rm crit} \int_0^{\infty} \Phi_\eps(r) \, \bigg[\frac{\gamma}{\gamma_{\rm crit}} c'_u(r) - c'(0) \bigg] \ dr	\\[6pt]
							& \xrightarrow{\eps \to 0} 2\gamma_{\rm crit} \| \Phi\|_{L^1(0,\infty)} \, \langle\delta_0, \tfrac{\gamma}{\gamma_{\rm crit}} c'_u(r) - c'(0)  \rangle = E_{\gamma,0}[u].
	\end{align}
        However, for general functions $u \in \AA$ it is not clear if the
        convergence $E_{\gamma,\eps}[u] \to E_{\gamma,0}[u]$ holds pointwise
        since the function $c_u'$ is not differentiable at the origin in
        general. We hence consider the smaller class of polytopal domains.  This
        is where we use the formula for the autocorrelation function for
        polytopes (see Lemma \ref{lemma:polytope_autocorrelation}).
        
\begin{proposition}[Pointwise convergence for polytopes] \label{prp-energy_convergence_polytopes}
	Let $\gamma > 0$ and suppose $\KK$ satisfies \hyp1--\hyp3. Let $\Omega \subset \T^n$ be a polytope and let $u = \chi_\Omega$. Then
		\begin{align}
                  E_{\gamma,\eps}[u] \xrightarrow{\eps \to 0} E_{\gamma,0}[u].
		\end{align}
\end{proposition}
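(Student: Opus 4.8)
The plan is to combine the reformulation of the energy from Lemma \ref{lem:reformulation_non_local} with the explicit polynomial structure of the autocorrelation function of a polytope from Lemma \ref{lemma:polytope_autocorrelation}. First I would write, using Lemma \ref{lem:reformulation_non_local},
\begin{align}
  E_{\gamma,\eps}[u] = 2\gamma_{\rm crit} \int_0^\infty \Phi_\eps(r) \Big[ \tfrac{\gamma}{\gamma_{\rm crit}} c_u'(r) - c_u'(0) \Big] \ dr.
\end{align}
Since $u = \chi_\Omega$ for a polytope $\Omega$, Lemma \ref{lemma:polytope_autocorrelation} gives an $R \in (0,1)$ and coefficients $a_2,\ldots,a_n$ with $c_u'(r) = a_1 + \sum_{j=2}^n j a_j r^{j-1}$ for $0 \leq r < R$, where $a_1 = c_u'(0) = -\frac{\omega_{n-1}}{n\omega_n} P[u]$. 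Hence on $(0,R)$ the bracket equals $(\tfrac{\gamma}{\gamma_{\rm crit}} - 1) c_u'(0) + \tfrac{\gamma}{\gamma_{\rm crit}} \sum_{j=2}^n j a_j r^{j-1}$, which is an explicit polynomial in $r$ with no constant-order error beyond the leading term.

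Next I would split the integral at $r = R$. On $(R,\infty)$ the integrand is controlled by $\|c_u'\|_{L^\infty} \Phi_\eps(r) \lesssim P[u]\,\Phi_\eps(r)$ (using Proposition \ref{prp-autocorrelation_bv} \ref{it-prp-autocorrelation_bv:perimeter}), and since $\Phi \in L^1(0,\infty)$ we get $\int_R^\infty \Phi_\eps(r)\ dr = \int_{R/\eps}^\infty \Phi(s)\ ds \to 0$ as $\eps \to 0$; so this tail contributes nothing in the limit. On $(0,R)$ I substitute the polynomial expression: the term proportional to $r^{j-1}$ with $j \geq 2$ integrates against $\Phi_\eps$ to give $\int_0^R r^{j-1}\Phi_\eps(r)\ dr \leq \int_0^1 r^{j-1}\Phi_\eps(r)\ dr \to 0$ by Proposition \ref{prp-kprop} \ref{it-prp-kprop:moment}. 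The only surviving term is the constant one, $2\gamma_{\rm crit}(\tfrac{\gamma}{\gamma_{\rm crit}} - 1) c_u'(0) \int_0^R \Phi_\eps(r)\ dr$. Finally, $\int_0^R \Phi_\eps(r)\ dr = \int_0^\infty \Phi_\eps(r)\ dr - \int_R^\infty \Phi_\eps(r)\ dr \to \|\Phi\|_{L^1(0,\infty)}$ as $\eps \to 0$, because $\int_0^\infty \Phi_\eps(r)\ dr = \|\Phi\|_{L^1(0,\infty)}$ for every $\eps$ and the tail vanishes as above. Putting the pieces together,
\begin{align}
  E_{\gamma,\eps}[u] \xrightarrow{\eps \to 0} 2\gamma_{\rm crit} \Big(\tfrac{\gamma}{\gamma_{\rm crit}} - 1\Big) c_u'(0)\, \|\Phi\|_{L^1(0,\infty)} = \Big(1 - \tfrac{\gamma}{\gamma_{\rm crit}}\Big) P[u] = E_{\gamma,0}[u],
\end{align}
where I used \eqref{gam-Phi} together with $c_u'(0) = -\frac{\omega_{n-1}}{n\omega_n}P[u]$ to convert the prefactor.

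The computation is essentially routine once the two earlier lemmas are in hand; there is no serious obstacle. The one point that needs a little care is the interchange of limit and integral and the handling of the boundary/tail contributions — but these are exactly controlled by Proposition \ref{prp-kprop}, whose parts \ref{it-prp-limits} and \ref{it-prp-kprop:moment} were stated for this purpose. In particular one should note that all estimates are uniform because $P[u]$ is a fixed finite number (the polytope is fixed), and that the decomposition of the bracket into a constant term plus higher-order monomials is valid only on $(0,R)$, which is why the split at $R$ is essential. No integrability issue arises at $r = 0$ since $\Phi_\eps \in L^1$ and $c_u'$ is bounded.
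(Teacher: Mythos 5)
Your proposal is correct and follows essentially the same route as the paper: reformulate via Lemma \ref{lem:reformulation_non_local}, insert the polynomial expression for $c_u'$ from Lemma \ref{lemma:polytope_autocorrelation}, split the integral at $R$, kill the higher moments with Proposition \ref{prp-kprop} \ref{it-prp-kprop:moment} and the tail with the $L^\infty$ bound on $c_u'$, and let the constant term produce $\|\Phi\|_{L^1(0,\infty)}$. The only cosmetic difference is that the paper reduces to the convergence of the non--local term alone before splitting, whereas you carry the full bracket through; the estimates are identical.
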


\begin{proof}
	Let $\eps > 0$. It suffices to show that the non--local term converges, i.e. (see Lemma \ref{lem:reformulation_non_local} and \eqref{eq:non_local_term_Phi_representation})
		\begin{align} \label{eq:non-local_convergence}
			\int_0^\infty & \Phi_\eps(r) \, c'_u(r) \ dr \xrightarrow{\eps \to 0} -\frac{1}{2\gamma_{\rm crit}} P[u].
		\end{align}
	Lemma \ref{lemma:polytope_autocorrelation} yields the existence of $a_1,\ldots, a_{n - 1} \in \R$ and $0 < R < 1$ such that 
		\begin{align}
			c'_u(r) = -\frac{\omega_{n - 1}}{n \omega_n} P[u] + \sum_{j = 1}^{n - 1} a_j \, r^j	&& \text{for all } 0 < r < R.
		\end{align}
	We use this identity to compute
				\small{\begin{align}
					\int_0^\infty & \Phi_\eps(r) \, c'_u(r) \ dr	\\
													 = & -\frac{\omega_{n - 1}}{n \omega_n} P[u] \int_0^R \Phi_\eps(r) \ dr + \sum_{j = 1}^{n - 1} a_j \int_0^R r^j \Phi_\eps(r) \ dr + \int_R^\infty \Phi_\eps(r) \, c'_u(r) \ dr	\\[6pt]
													 = & -\frac{\omega_{n - 1}}{n \omega_n} P[u] \, I_1^{(\eps)} + I_2^{(\eps)} + I_3^{(\eps)}.
				\end{align}}
                              For the first integral, we observe that by monotone convergence 
                        \begin{align}
					I_1^{(\eps)}	& =  \int_0^{\frac R \eps} \Phi(r) \ dr \xrightarrow{\eps \to 0} \|\Phi\|_{L^1(0,\infty)}.
				\end{align}
                                For the second integral we use Proposition
                                \ref{prp-kprop} \ref{it-prp-kprop:moment} to
                                obtain
				\begin{align}
					|I_2^{(\eps)}|	& \leq \sum_{j = 1}^{n - 1} |a_j| \int_0^1 r^j \Phi_\eps(r) \ dr \xrightarrow{\eps \to 0} 0.
				\end{align}
			For the third integral, using Proposition \ref{prp-autocorrelation_bv} \ref{it-prp-autocorrelation_bv:perimeter} we have
				\begin{align}
					|I_3^{(\eps)}|	& \leq \int_R^\infty \Phi_\eps(r) \ |c_u'(r)| \ dr \leq \frac{\omega_{n - 1}}{n \omega_n} P[u] \int_{\frac R \eps}^\infty \Phi(r) \ dr \xrightarrow{\eps \to 0} 0.
				\end{align}
			Altogether \eqref{eq:non-local_convergence} follows.
\end{proof}

We are now in the position to prove the compactness (Theorem \ref{thm:compactness} \ref{it-thm:compactness:compactness}) and non--compactness (Theorem \ref{thm:compactness} \ref{it-thm:compactness:non_compactness}) of $E_{\gamma,\eps}$:

\begin{proof}[Proof of Theorem \ref{thm:compactness}] \text{}
	In order to prove assertion \ref{it-thm:compactness:compactness} we use Proposition \ref{prp-pointwise_bound_energy} to obtain
		\begin{align}
			0 \leq \Big(1 - \frac{\gamma}{\gamma_{\rm crit}} \Big) P[u_\eps] = E_{\gamma,0}[u_\eps] & \leq E_{\gamma,\eps}[u_\eps] \lesssim 1
		\end{align}
uniformly in $\eps$. Using the $L^1$--compactness of the perimeter functional proves the claim. In order to prove assertion \ref{it-thm:compactness:non_compactness} we construct a sequence of laminates with the desired properties. We define $\Omega := (0,\theta) \times [0,1)^{n - 1} \subset \T^n$ and $v_k(x) := \chi_{\Omega}(kx)$. Then $P[v_k] = 2^{k + 1}$ and $v_k \in \AA$ for all $k \in \N$. Moreover, there does not exist a subsequence of $v_k$ which converges in $L^1$. Indeed, assume there exists a subsequence (not relabeled) and $v \in L^1(\T^n,\{0,1\})$, such that $v_k \to v$ in $L^1$ as $k \to \infty$. Then $\|v\|_{L^1(\T^n)} = \theta$. We note that $v_k \rightharpoonup^* \theta$ in $L^\infty$ as $k \to \infty$ (see \cite[Example 2.7]{BraDef:1998}) and thus
	\begin{align}
		\|v_k - v\|_{L^1(\T^n)}	& = \int_{\T^n} v_k(x) \ dx + \int_{\T^n} v(x) \ dx - 2\int_{\T^n} v_k(x) \, v(x) \ dx	\\[6pt]
								& \xrightarrow{k \to \infty} 2 \theta - 2\theta^2 \neq 0,
	\end{align}
a contradiction. Further, from Proposition \ref{prp-energy_convergence_polytopes} we obtain
		\begin{align}
			E_{\gamma,\eps}[v_k] \xrightarrow{\eps \to 0} \Big(1 - \frac{\gamma}{\gamma_{\rm crit}} \Big) P[v_k] \leq 0,
		\end{align}
	since $\gamma \geq \gamma_{\rm crit}$. Hence, for all $k \in \N$ there exists $\eps^*(k)$ such that
  	\begin{align}\label{eq:noncompact_approx}
  		|E_{\gamma,\eps}[v_k] - E_{\gamma,0}[v_k]| < \frac 1k 	&& \text{for all } 0 < \eps < \eps^*.
  	\end{align}
  	For $\frac 12 \min\{\frac{1}{k + 1}, \eps^*(k + 1)\} \leq \eps \leq \frac 12 \min\{\frac 1k, \eps^*(k)\}$, we choose $u_\eps = v_k$. Then there does not exist an $L^1$--convergent subsequence as was shown above, and
  		\begin{align} \label{eq:supercritical_estimate}
  			E_{\gamma,\eps}[u_\eps] \stackrel{\eqref{eq:noncompact_approx}}\leq E_{\gamma,0}[v_k] + \frac 1k \leq 3,
  		\end{align}
  	from which the claim follows.
\end{proof}

From the techniques used to prove Theorem \ref{thm:compactness}, we are now able to show Corollary \ref{coro:finer_behaviour}:

\begin{proof}[Proof of Corollary \ref{coro:finer_behaviour}]
  Assertion \ref{it-coro:finer_behaviour1} follows directly from Proposition
  \ref{prp-pointwise_bound_energy}. In order to show assertion \ref{it-coro:finer_behaviour2}, let $v_k$ and $u_\eps$ as in the proof of Theorem \ref{thm:compactness} \ref{it-thm:compactness:non_compactness}. Then 
  		\begin{align}
  			P[u_\eps] \xrightarrow{\eps \to 0} +\infty, && E_{\gamma,\eps}[u_\eps] \stackrel{\eqref{eq:supercritical_estimate}}\leq E_{\gamma,0}[v_k] + \frac 1k,
  		\end{align}
  	from which the claim follows since $\gamma > \gamma_{\rm crit}$.  
  In order to prove the
  necessity of a sequence with unbounded perimeter, we argue by contradiction:
  assume that (after selection of a subsequence) $P[u_\eps] \lesssim 1$, then
  $-1 \lesssim E_{\gamma,\eps}[u_\eps]$ by Proposition
  \ref{prp-pointwise_bound_energy}, which contradicts
  \eqref{eq:sequence_energy_negative_infinity}.
\end{proof}

We are now in the position to prove the $\Gamma$--convergence of $E_{\gamma,\eps}$:

\begin{proof}[Proof of Theorem \ref{thm:convergence}]
  For the liminf inequality, let $u_\eps \to u$ in $L^1(\T^n)$. By Theorem
  \ref{thm:compactness} \ref{it-thm:compactness:compactness} we can assume
  without loss of generality that $u_\eps, u \in \AA$. From the lower
  semicontinuity of the perimeter functional and Proposition
  \ref{prp-pointwise_bound_energy} we obtain
  \begin{align}
    0 \leq E_{\gamma,0}[u]	& \leq \liminf_{\eps \to 0} E_{\gamma,0}[u_\eps] \stackrel{\eqref{eq:pointwise_lower_bound}}{\leq} \liminf_{\eps \to 0} E_{\gamma,\eps}[u_\eps].
  \end{align}
  To show the limsup inequality, we use Proposition
  \ref{prp-energy_convergence_polytopes} and an approximation using polytopes. If
  $u \notin \AA$, the statement is obvious. So let $u \in \AA$. Then there exists a sequence of polytopes $\Omega_k \subset \T^n$, such that (see \eqref{eq:density})
  	\begin{align}\label{eq:upper_bound_polytope_approx}
  		\|\chi_{\Omega_k} - u\|_{L^1(\T^n)} \xrightarrow{k \to \infty} 0, && |P[\chi_{\Omega_k}] - P[u]| \xrightarrow{k \to \infty} 0.
  	\end{align}
  By a rescaling of $\Omega_k$, we can without loss of generality assume that $v_k := \chi_{\Omega_k} \in \AA$. We proceed similarly as in the proof of Theorem \ref{thm:compactness}. By Proposition \ref{prp-energy_convergence_polytopes} we have $E_{\gamma,\eps}[v_k] \to E_{\gamma,0}[v_k]$ as $\eps \to 0$ for any $k \in \N$. Hence, for all $k \in \N$ there exists $\eps^*(k)$ such that
  	\begin{align}\label{eq:limsup_approx}
  		|E_{\gamma,\eps}[v_k] - E_{\gamma,0}[v_k]| < \frac 1k 	&& \text{for all } 0 < \eps < \eps^*.
  	\end{align}
  	For $\frac 12 \min\{\frac{1}{k + 1}, \eps^*(k + 1)\} < \eps < \frac 12 \min\{\frac 1k, \eps^*(k)\}$ we choose $u_\eps = v_k$. Then 
  		\begin{align}
  			\|u_\eps - u\|_{L^1(\T^n)} \xrightarrow{\eps \to 0} 0,	&& |P[u_\eps] - P[u]| \xrightarrow{\eps \to 0} 0, && E_{\gamma,\eps}[u_\eps] \stackrel{\eqref{eq:limsup_approx}}\leq E_{\gamma,0}[v_k] + \frac 1k,
  		\end{align}
  	from which the claim follows.
\end{proof}

	\renewcommand{\thesection}{\Alph{section}}
	\setcounter{section}{0}

      \section{Appendix} \label{sec:appendix}

     In the following Lemma, we present some examples of admissible kernels.
      \begin{lemma} \label{lem:helmholtz} Let $s \in (1,2]$ and let $\KK^{(s)} \in L^1(\R^n)$ be the unique
        solution to
        \begin{align}\label{eq:s-helmholtz}
          \KK^{(s)} + (-\Delta)^{\frac s2} \KK^{(s)} = \delta_0 &&
                                         \text{in } \R^n.
        \end{align}
        Then $\KK^{(s)}$ satisfies \hyp1 -- \hyp3. Moreover, for $s = 2$ we have
        \begin{align} \label{eq:lemma:helmholtz_crit} \int_\Rn |z| \, \KK^{(2)}(z) \
          dz = \sqrt \pi \, \Gamma(\tfrac{n + 1}2) \, \Gamma(\tfrac{n}2)^{-1}
        \end{align}
        and the associated critical value $\gamma_{\rm crit}^{(2)}$
        (cf. \eqref{def-gam}) is given by $\gamma_{\rm crit}^{(2)} = 1$.
      \end{lemma}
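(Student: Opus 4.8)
The plan is to treat the three hypotheses and the explicit computation separately, exploiting the fact that $\KK^{(s)}$ is the Bessel-type potential with Fourier symbol $(1+|\xi|^s)^{-1}$. First I would record that by rotational invariance of the symbol, $\KK^{(s)}$ is radial, so \hyp1 is immediate; and since $(1+|\xi|^s)^{-1} \in L^1$ fails in general but $\KK^{(s)} \geq 0$ and $\widehat{\KK^{(s)}}(0) = 1$ forces $\KK^{(s)} \in L^1(\R^n)$ with $\|\KK^{(s)}\|_{L^1} = 1$, the only real content is \hyp2 and \hyp3. For \hyp2, i.e. $|z|\,\KK^{(s)}(z) \in L^1(\R^n)$, I would use the subordination formula
\begin{align}
  \KK^{(s)}(z) = \int_0^\infty e^{-t} \, p_t^{(s)}(z) \ dt,
\end{align}
where $p_t^{(s)}$ is the density of the rotationally invariant $s$-stable semigroup (with $p_t^{(2)}$ the Gaussian). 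The known asymptotics $p_t^{(s)}(z) \sim c\, t\, |z|^{-n-s}$ as $|z| \to \infty$ for $s < 2$, together with the Gaussian decay for $s = 2$, give algebraic decay $\KK^{(s)}(z) \lesssim |z|^{-n-s}$ at infinity (after integrating the subordinator), which is integrable against $|z|$ precisely because $s > 1$; near the origin $\KK^{(s)}$ has at worst the integrable singularity $|z|^{-(n-s)}$ (or is bounded when $s \geq n$), and $|z|$ times this is locally integrable. This is the step I expect to be the main obstacle, since it requires marshalling the stable-density estimates cleanly; I would cite the standard references (Blumenthal--Getoor, or Grafakos for Bessel potentials when $s=2$) rather than reprove them.

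Next, for \hyp3, by \eqref{def-Phi} I must show $\Phi(r) = \int_{B_r(0)^c}\KK^{(s)}(z)\,dz \geq 0$, which holds trivially because $\KK^{(s)} \geq 0$ pointwise — and positivity of $\KK^{(s)}$ follows again from the subordination formula, as $p_t^{(s)} > 0$ for every $t > 0$. So \hyp3 is actually the easy hypothesis here, the whole difficulty of the general hypotheses having collapsed because these particular kernels are genuinely nonnegative. At this point Proposition \ref{prp-kprop} applies and $\gamma_{\rm crit}^{(s)}$ is well-defined via \eqref{def-gam}.

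Finally, for the explicit value when $s = 2$: here $\KK^{(2)}$ is the classical Bessel potential $G_2$ with $\widehat{G_2}(\xi) = (1+|\xi|^2)^{-1}$, and I would compute $\int_{\R^n}|z|\,\KK^{(2)}(z)\,dz$ by passing to polar coordinates and using the integral representation $G_2(z) = c_n \int_0^\infty e^{-t - |z|^2/(4t)}\, t^{(2-n)/2}\, \frac{dt}{t}$ (with $c_n = (4\pi)^{-n/2}$), swapping the order of integration, evaluating the radial integral $\int_0^\infty r^n e^{-r^2/(4t)}\,dr$ via the Gamma function, and then the remaining $t$-integral, again a Gamma integral. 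Collecting the constants should yield $\int_{\R^n}|z|\,\KK^{(2)}(z)\,dz = \sqrt\pi\,\Gamma(\tfrac{n+1}{2})\,\Gamma(\tfrac n2)^{-1}$ as claimed. Substituting this into \eqref{def-gam}, namely $\gamma_{\rm crit} = \frac{n\omega_n}{2\omega_{n-1}}\big(\int |z|\KK\big)^{-1}$, and using $\omega_n = \pi^{n/2}/\Gamma(\tfrac n2 + 1)$, the ratio $\frac{n\omega_n}{2\omega_{n-1}}$ simplifies to exactly $\tfrac12\sqrt\pi\,\Gamma(\tfrac{n+1}{2})\,\Gamma(\tfrac n2)^{-1}$ — wait, I should double-check the factor: $\frac{n\omega_n}{2\omega_{n-1}} = \frac{n\pi^{n/2}/\Gamma(n/2+1)}{2\pi^{(n-1)/2}/\Gamma((n+1)/2)} = \frac{n\sqrt\pi\,\Gamma((n+1)/2)}{2\Gamma(n/2+1)} = \frac{\sqrt\pi\,\Gamma((n+1)/2)}{2\Gamma(n/2)}$ using $\Gamma(n/2+1) = (n/2)\Gamma(n/2)$ — which is precisely half of \eqref{eq:lemma:helmholtz_crit}, hmm, so then $\gamma_{\rm crit}^{(2)} = \tfrac12$. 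I would recheck the normalization of $\KK$ (whether \eqref{eq:s-helmholtz} is normalized so $\int\KK = 1$, which affects nothing here) and of $\Phi$; most likely the intended identity has $\gamma_{\rm crit}^{(2)} = 1$ after correctly tracking that $\int_{\R^n}|z|\KK^{(2)} = 2 \cdot \frac{n\omega_n}{2\omega_{n-1}}$, i.e. I have the radial integral off by a factor of $2$ and the honest computation gives the stated $\sqrt\pi\,\Gamma(\tfrac{n+1}{2})/\Gamma(\tfrac n2)$, whence $\gamma_{\rm crit}^{(2)} = \frac{n\omega_n}{2\omega_{n-1}} \cdot \frac{1}{\sqrt\pi\,\Gamma((n+1)/2)/\Gamma(n/2)}$; plugging the simplified ratio this is $\frac{\sqrt\pi\,\Gamma((n+1)/2)/(2\Gamma(n/2))}{\sqrt\pi\,\Gamma((n+1)/2)/\Gamma(n/2)} = \tfrac12$ still, so the resolution must be that $\gamma_{\rm crit}$ in \eqref{def-gam} actually reads $\frac{n\omega_n}{\omega_{n-1}}(\cdots)^{-1}$ or the perimeter constant in Proposition \ref{prp-autocorrelation_bv}\ref{it-prp-autocorrelation_bv:perimeter} contributes the missing $2$; I would reconcile these normalization constants carefully as the bookkeeping step, trusting that the stated $\gamma_{\rm crit}^{(2)} = 1$ is correct once all $\sigma_n$ versus $\omega_n$ and factors of $2$ from the symmetric difference are consistently tracked.
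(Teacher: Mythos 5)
Your route is genuinely different from the paper's and is sound in outline. The paper argues by Fourier methods: radiality of the symbol gives (H1); for $1<s<2$ it cites \cite{MelWu:2021} for nonnegativity and the decay $|\KK^{(s)}(r)|\lesssim r^{-n-s}$, which yield (H2) and (H3); and for $s=2$ it solves the radial ODE explicitly, obtaining $\KK^{(2)}(z)=(2\pi)^{-n/2}|z|^{1-\frac n2}K_{\frac n2-1}(|z|)$, reads off nonnegativity from the modified Bessel function, and takes \eqref{eq:lemma:helmholtz_crit} from a table. Your subordination formula $\KK^{(s)}=\int_0^\infty e^{-t}p_t^{(s)}\,dt$ treats all $s\in(1,2]$ uniformly, makes positivity (hence (H3)) transparent, and reduces (H2) to standard stable/Gaussian bounds; your heat--kernel computation of $\int_{\Rn}|z|\,\KK^{(2)}(z)\,dz$ is a clean, self--contained replacement for the table lookup, and it does produce $\sqrt\pi\,\Gamma(\tfrac{n+1}2)\,\Gamma(\tfrac n2)^{-1}$. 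The only item you omit relative to the statement is uniqueness of the $L^1$ solution, which the paper settles in one line via injectivity of the Fourier transform on $L^1(\Rn)$.

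The final bookkeeping step, however, contains an arithmetic slip, and the resulting speculation that the paper's normalisation of $\gamma_{\rm crit}$ might be off is unfounded. You correctly reach
\begin{align}
  \frac{n\omega_n}{2\omega_{n-1}} = \frac{n\sqrt\pi\,\Gamma(\tfrac{n+1}2)}{2\,\Gamma(\tfrac n2+1)},
\end{align}
but upon substituting $\Gamma(\tfrac n2+1)=\tfrac n2\Gamma(\tfrac n2)$ you conclude the ratio equals $\tfrac12\sqrt\pi\,\Gamma(\tfrac{n+1}2)\Gamma(\tfrac n2)^{-1}$. In fact $\frac{n}{2\cdot\frac n2\Gamma(n/2)}=\frac{1}{\Gamma(n/2)}$, so
\begin{align}
  \frac{n\omega_n}{2\omega_{n-1}} = \frac{\sqrt\pi\,\Gamma(\tfrac{n+1}2)}{\Gamma(\tfrac n2)} = \int_{\Rn}|z|\,\KK^{(2)}(z)\ dz,
\end{align}
and $\gamma_{\rm crit}^{(2)}=1$ follows immediately from \eqref{def-gam}. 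There is no hidden factor of $2$, no issue with the radial integral, and no need to revisit the definition of $\gamma_{\rm crit}$ or the constant in Proposition \ref{prp-autocorrelation_bv}; the closing paragraph of second--guessing should be replaced by this one--line verification.
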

      \begin{proof}
  We first note that the equation can be solved explicitly with Fourier
  methods. More precisely it holds
  \begin{align}
    \hat \KK^{(s)}(\xi) := \int_{\R^n} \KK^{(s)}(z) \, e^{-2\pi \mathbf i z \cdot \xi} \ dz = \frac{1}{1 + (2\pi)^s |\xi|^s}	&& \text{for all } \xi \in \R^n.
  \end{align}
  Since the Fourier transform is an injective map $\hat \bullet: L^1(\Rn) \longrightarrow C^0_b(\R^n)$, the solution of \eqref{eq:s-helmholtz} is unique. Since $\hat \KK^{(s)}$ is radial, so is $\KK^{(s)}$ and thus \hyp1. We treat the cases $s = 2$ and $s \neq 2$
  separately. If $1 < s < 2$, we get from \cite[Lemma 1.2]{MelWu:2021} that
  $\KK^{(s)} \geq 0$ and $|\KK^{(s)}(r)| \lesssim r^{-n - s}$ for $r \geq 1$
  which directly yield both \hyp2 and \hyp3. If $s = 2$, switching to polar
  coordinates in \eqref{eq:s-helmholtz}, $\KK^{(2)}$ solves
  \begin{align}
    \KK^{(2)}(r) - \frac{n - 1}{r} (\KK^{(2)})'(r) - (\KK^{(2)})''(r) = 0	&& \text{for all } r > 0.
  \end{align}
  Substituting $\KK^{(2)}(r) := r^{1 - \frac n2} g(r)$, we obtain
  \begin{align}
    r^2 g''(r) + r  g'(r) - (r^2 + (\tfrac n2 - 1)^2) g(r) = 0	&& \text{for all } r > 0.
  \end{align}
  This is the modified Bessel equation. The unique decaying solution is given by
  $g = K_{\frac n2 - 1}$, where $K_\nu$ denotes the decaying modified Bessel
  function of the second kind of genus $\nu \geq 0$, i.e.
  \begin{align} \label{eq:bessel_ode} K_\nu(r) = \int_0^\infty \cosh(\nu t) \,
    e^{-r \, \cosh(t)} dt && \text{for all } r > 0.
  \end{align}
  Altogether, we obtain
  \begin{align}
    \KK^{(2)}(z) = a |z|^{1 - \frac n2} K_{\frac n2 - 1}(|z|)	&& \text{for all } z \in \R^n,
  \end{align}
  where $a = (2\pi)^{-\frac n2}$ such that $\|\KK^{(2)}\|_{L^1(\R^n)} = 1$. Since
  $K_\nu \geq 0$ for all $\nu \geq 0$, it follows \hyp3. The identity
  \eqref{eq:lemma:helmholtz_crit} and \hyp2 follow from
  \cite[p.676]{GraRyz:2015}.
\end{proof}

\renewcommand{\em}{\it} \bibliographystyle{plain} \bibliography{biblio} \normalsize

\end{document}